\tikzset{My Arrow Style/.style={single arrow, draw, text width=0.75cm}}
\newcounter{proofpart}
\xpretocmd{\proof}{\setcounter{proofpart}{0}}{}{}
\newcommand{\proofpart}[1]{%
  \par
  \addvspace{\medskipamount}%
  \stepcounter{proofpart}%
  \noindent\emph{Part \theproofpart: #1}\par\nobreak\smallskip
  \@afterheading
}
\newcommand{\bc}[1]{\textcolor{blue}{#1}}
\newcommand{\floor}[1]{\lfloor{#1}\rfloor}
\newcommand{\ceil}[1]{\lceil{#1}\rceil}
\newtheorem*{thm*}{Theorem}
\newtheorem{thm}{Theorem}
\newtheorem{obs}[thm]{Observation}
\newtheorem{cor}[thm]{Corollary}
\newtheorem{lemma}[thm]{Lemma}
\newtheorem{defn}[thm]{Definition}
\newtheorem{con}[thm]{Conjecture}
\newtheorem*{state*}{Statement}
\newtheorem{state}[thm]{Statement}
\definecolor{Code}{rgb}{0,0,0}
\definecolor{Decorators}{rgb}{0.5,0.5,0.5}
\definecolor{Numbers}{rgb}{0.5,0,0}
\definecolor{MatchingBrackets}{rgb}{0.25,0.5,0.5}
\definecolor{Keywords}{rgb}{0,0,1}
\definecolor{self}{rgb}{0,0,0}
\definecolor{Strings}{rgb}{0,0.63,0}
\definecolor{Comments}{rgb}{0,0.63,1}
\definecolor{Backquotes}{rgb}{0,0,0}
\definecolor{Classname}{rgb}{0,0,0}
\definecolor{FunctionName}{rgb}{0,0,0}
\definecolor{Operators}{rgb}{0,0,0}
\definecolor{Background}{rgb}{0.98,0.98,0.98}
\lstdefinelanguage{Python}{
numbers=left,
numberstyle=\footnotesize,
numbersep=1em,
xleftmargin=1em,
framextopmargin=2em,
framexbottommargin=2em,
showspaces=false,
showtabs=false,
showstringspaces=false,
frame=l,
tabsize=4,
basicstyle=\ttfamily\small\setstretch{1},
backgroundcolor=\color{Background},
commentstyle=\color{Comments}\slshape,
stringstyle=\color{Strings},
morecomment=[s][\color{Strings}]{"""}{"""},
morecomment=[s][\color{Strings}]{'''}{'''},
morekeywords={import,from,class,def,for,while,if,is,in,elif,else,not,and,or,print,break,continue,return,True,False,None,access,as,,del,except,exec,finally,global,import,lambda,pass,print,raise,try,assert},
keywordstyle={\color{Keywords}\bfseries},
morekeywords={[2]@invariant,pylab,numpy,np,scipy},
keywordstyle={[2]\color{Decorators}\slshape},
emph={self},
emphstyle={\color{self}\slshape},
}
\begin{document}
\thispagestyle{empty}
\title{ Maximal bipartite graphs with a unique minimum dominating set}

\author{Garrison Koch}
\address{Rochester Institute of Technology}
\curraddr{1 Lomb Memorial Dr, Rochester, NY 14623}
\email{glk5534@rit.edu}

\author{Darren Narayan}
\address{Rochester Institute of Technology}
\curraddr{1 Lomb Memorial Dr, Rochester, NY 14623}
\email{dansma@rit.edu}

\begin{abstract}
\noindent  In 2003, Fischermann et al. considered the maximum size of \textit{uniquely-dominatable} graphs, graphs whose dominating number is realized only by a unique dominating set. They conjectured a size bound and provide a general graph construction that shows the bound is tight \cite{Original_Paper}. In 2010, Shank and Fraboni prove Fischermann's bound is true when $\gamma = 2$ \cite{Shank_paper}. In this paper, we observe how Fischermann's bound changes if we impart a different restriction on a graph --- bipartiteness. We conjecture a bound on the maximum number of edges possible for uniquely-dominatable bipartite graphs. We provide constructions to demonstrate this bound is tight. We prove our bipartite bound is true for the $\gamma = 2$ and $n = 3\gamma$ cases. We also discuss perfect domination and how it relates to our extremal graph constructions. We provide constructions that meet both Fischermann's bound for all graphs and our bound for bipartite graphs respectively, both of which are perfectly dominated.
\end{abstract}

\maketitle

\renewcommand{\baselinestretch}{1.2}

\section{Introduction}
A \textit{dominating set} of a finite, simple graph $G = (V,E)$ is a set of vertices $D \subseteq V$ such that every vertex in $V-D$ is adjacent to a vertex in $D$. The domination number, $\gamma(G)$ is the minimum cardinality of a dominating set of $G$. In 1965, Vizing proved that the most number of edges a graph $G$ can have given $|V(G)| = n$ vertices and dominating number $\gamma(G) = \gamma \geq 2$ is $\frac{1}{2}(n-\gamma)(n-\gamma+2)$ \cite{Vizing}. This will henceforth be referred to as \textit{Vizing's bound}. A newer dominating variant studied is called \textit{unique domination}. As the name suggests, a uniquely-dominatable graph is a graph with only one (unique) minimum dominating set. In 2003, Fischermann et al. considered how Vizing's bound could be reduced when they restrict their graphs to only uniquely-dominatable graphs. In their 2003 paper, Fischermann et al. conjecture the upper bound on the size of uniquely-dominatable graphs is
$$\mathbf{m}(n,\gamma) = {{n-\gamma}\choose{2}} - \gamma(\gamma-2)$$ when $n \geq 3\gamma$ and $\gamma \geq 2$ are the order and dominating number respectively \cite{Original_Paper}. We will call this \textit{Fischermann's bound}. To avoid confusion with the subsequent bound we will conjecture, Fischermann's bound will use the above bolded $\mathbf{m}$ when referenced. In their 2003 paper, Fischermann et al. construct uniquely-dominatable graphs with $n$ vertices, domination number $\gamma$, and size $\mathbf{m}(n,\gamma)$ \cite{Original_Paper}. In 2010, Shank and Fraboni proved the above bound is true when $\gamma = 2$ \cite{Shank_paper}. Since then many other graph classes have been considered, such as by Hedetniemi as well as Gunther et al. who characterized how the bound behaves on certain tree graphs \cite{Hedetniemi_Paper} \cite{Gunther}. The unique minimum dominating set problem has also been studied on dominating function variations such as on locating-dominating sets \cite{L-D_Paper}. This leads us to the main conjecture of the paper.
\begin{con}\label{conj:MAINEDGE}
    Let $G$ be a bipartite graph without isolated vertices with order $n$ and dominating number $\gamma$. Let $G$ have a unique minimum dominating set. If $\gamma \geq 2$ and $n\geq 3\gamma$, then the size of $G$, $s(G)$, is bounded \textbf{above} by $$m(n,\gamma) = 2 \gamma + 2 \Big\lceil \frac{\gamma}{2} \Big\rceil \Big\lfloor \frac{\gamma}{2} \Big\rfloor + \min\Big\{n-3\gamma, 2 \Big\lceil \frac{\gamma}{2} \Big\rceil - \Big\lfloor \frac{\gamma}{2}\Big\rfloor+ 1\Big\}\Big(2\Big\lceil\frac{\gamma}{2}\Big\rceil + 1\Big) + \sum_{i=1}^\Phi \Big((2\Big\lceil\frac{\gamma}{2}\Big\rceil+1) + \Big\lceil\frac{i}{2}\Big\rceil\Big)$$
where $\Phi(n,\gamma) = \max(0, n-3\gamma - 2\lceil\frac{\gamma}{2}\rceil - \lfloor\frac{\gamma}{2}\rfloor + 1).$
\end{con}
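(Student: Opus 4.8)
The plan is to fix a unique minimum dominating set $D$ with $|D| = \gamma$, exploit the bipartition $V = A \cup B$ by writing $D_A = D \cap A$ and $D_B = D \cap B$, and partition $E(G)$ into four classes: edges between $D_A$ and $D_B$, edges from $D_A$ to $B \setminus D_B$, edges from $D_B$ to $A \setminus D_A$, and edges between $A \setminus D_A$ and $B \setminus D_B$. Since $G$ is bipartite, every vertex of $A \setminus D_A$ must be dominated from $D_B$ and every vertex of $B \setminus D_B$ from $D_A$. The target $m(n,\gamma)$ splits additively into a part depending only on $\gamma$ (the terms $2\gamma$ and $2\ceil{\gamma/2}\floor{\gamma/2}$) and a part growing with $n-3\gamma$ (the $\min$ term and the triangular sum $\sum_{i=1}^{\Phi}$), so I would prove the bound by (i) establishing the constraints unique domination forces on these classes, (ii) bounding the $\gamma$-dependent classes directly, and (iii) inducting on $n$ to control the growing classes, with the case $n = 3\gamma$ — where the $\min$ term and $\Phi$ both vanish and the bound collapses to $2\gamma + 2\ceil{\gamma/2}\floor{\gamma/2}$ — as the base case. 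Architecturally this mirrors Fischermann's treatment of $\mathbf{m}(n,\gamma)$, with the clique on the non-dominating vertices replaced by the best available bipartite structure.

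The first substantive step is a set of structural lemmas on unique domination specialized to the bipartite setting. Minimality of $D$ forces every $d \in D$ to have a private neighbor, which here is either an external private neighbor in the opposite part or $d$ itself when $d$ has no neighbor in $D$ on the other side; this is what should account for the linear $2\gamma$ contribution once one checks that the extremal configuration assigns each dominator a bounded private-neighbor count. Uniqueness is the sharper condition: no size-$\gamma$ set obtained by swapping a dominator $d$ for an outside vertex $v$ may dominate. I would convert this into forbidden-edge statements of the form ``if $v \in V \setminus D$ has a sufficiently rich neighborhood into $D$, then $(D \setminus \{d\}) \cup \{v\}$ dominates, contradicting uniqueness,'' which caps the $D$-to-$(V\setminus D)$ edges at each outside vertex. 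A companion lemma should show that the extremal split of $D$ across the two parts is balanced, of sizes $\ceil{\gamma/2}$ and $\floor{\gamma/2}$, which is precisely what produces the factor $\ceil{\gamma/2}\floor{\gamma/2}$ and the per-vertex increment $2\ceil{\gamma/2}+1$.

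With these constraints I would count edges class by class and then optimize. The edges between $D_A$ and $D_B$ number at most $|D_A||D_B| = \ceil{\gamma/2}\floor{\gamma/2}$ for a balanced split, and together with the edges forced by the private-neighbor structure and the first layer of dominated vertices this should recover the $\gamma$-only portion $2\gamma + 2\ceil{\gamma/2}\floor{\gamma/2}$. For the vertices added beyond the $n = 3\gamma$ configuration I would order them and show, using the no-swap lemmas and the bipartite restriction, that an early block (the $\min$ regime, capped at the threshold $2\ceil{\gamma/2} - \floor{\gamma/2} + 1$) each admit at most $2\ceil{\gamma/2}+1$ edges into $D$, while each later surplus vertex indexed by $i$ admits at most $(2\ceil{\gamma/2}+1) + \ceil{i/2}$ edges once attachments among the surplus vertices themselves are permitted — exactly the summand of the triangular tail, with $\Phi$ counting the surplus vertices past the threshold $2\ceil{\gamma/2}+\floor{\gamma/2}-1$. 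Packaging these per-vertex bounds into an induction on $n$ that deletes a highest-indexed surplus vertex — after verifying the deletion preserves both $\gamma$ and the uniqueness of $D$ — then assembles $m(n,\gamma)$.

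The main obstacle is the large-$n$ regime governed by the triangular sum. In the base and small-surplus regimes the constraints are local and the extremal graph is essentially forced, but once surplus vertices may attach to one another the space of competing bipartite configurations explodes, and the delicate point is proving that the slow triangular growth $\ceil{i/2}$ — rather than any faster accumulation of inside edges — is genuinely optimal while still forbidding every alternative dominating set. Controlling this interaction between inside edges among surplus vertices and the no-swap condition is exactly where a clean induction threatens to break down, and I expect this is why the full statement remains conjectural while only the $\gamma = 2$ and $n = 3\gamma$ reductions, where the sum is empty or trivial, admit a complete argument.
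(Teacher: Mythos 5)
Your proposal targets a statement that the paper itself never proves: Conjecture \ref{conj:MAINEDGE} is left open there, and the paper establishes only its tightness (the constructions of Theorem \ref{thm:Constructions}) and the two special cases $n = 3\gamma$ (Theorem \ref{thm:n=3g}) and $\gamma = 2$. Measured against those partial results, your outline has genuine gaps, and they sit at exactly the places where the paper also stops. Two of your load-bearing lemmas are asserted rather than proved, and one of them is structurally unsound. First, the ``companion lemma'' that the split of $D$ across the two parts is balanced cannot be taken as a hypothesis in an upper-bound argument: you must bound $s(G)$ for \emph{every} split $(|D_A|,|D_B|)$ and only then optimize, which is what the paper's $n=3\gamma$ proof actually does (it derives $s(G) \le 2\gamma + 2|D_X||D_Y|$ for an arbitrary split and then maximizes the product). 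Second, your class-by-class accounting, which bounds the $D_A$--$D_B$ edges by $|D_A||D_B|$ and expects the remaining classes to supply the rest of the $\gamma$-only portion, fails because these classes cannot be bounded independently: the paper's $n=3\gamma$ analysis shows that for each dominator pair $(x_i,y_j)$ there is a \emph{joint} budget of at most two edges shared among the possible edge $\{x_i,y_j\}$ and the four possible edges between their external private neighborhoods. Bounding the classes separately would allow roughly three edges per pair, i.e.\ about $2\gamma + 3\lceil\frac{\gamma}{2}\rceil\lfloor\frac{\gamma}{2}\rfloor$, exceeding $m(3\gamma,\gamma)$. Indeed, in the extremal graphs $D$ is independent (they are perfectly dominated), so the $D_A$--$D_B$ class you lead with contributes nothing at all.

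The decisive gap is the inductive step for the large-$n$ regime, which you describe but do not supply. In an arbitrary bipartite graph with a unique minimum dominating set there is no intrinsic ordering of ``surplus'' vertices, so ``delete a highest-indexed surplus vertex'' is not a defined operation. To make it one you would need to prove that some vertex $v \notin D$ exists with $\deg(v) \le (2\lceil\frac{\gamma}{2}\rceil+1) + \lceil\frac{\Phi}{2}\rceil$ whose deletion preserves the domination number, preserves the uniqueness of $D$, and leaves no isolated vertex; none of these three claims is automatic, since deleting a vertex can create a second minimum dominating set, lower $\gamma$, or isolate a private neighbor, any of which makes the inductive hypothesis inapplicable. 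The same issue afflicts the $\min$ regime: the per-vertex cap of $2\lceil\frac{\gamma}{2}\rceil+1$ is read off from the construction (one edge into $D$ plus $|Y|$ edges to non-dominators), not derived as a constraint that every uniquely-dominatable bipartite graph must satisfy. You candidly flag this difficulty in your final paragraph, but flagging it does not discharge it: as written, your argument is complete only where the triangular sum is empty or the order is $3\gamma$, which is precisely the territory the paper already covers, and even there your bookkeeping needs the joint per-pair budget rather than independent class bounds. The proposal is a reasonable roadmap, consistent in spirit with the paper's special-case proofs, but it is not a proof of the conjecture.
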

We will henceforth refer to this as the \textit{bipartite bound}, or simply Conjecture \ref{conj:MAINEDGE}. As we will discuss in Theorem \ref{thm:2privateneighbs}, a unique dominating set of size $\gamma$ is not possible if $n <3\gamma$, hence the restriction seen above. Also, if $\gamma = 1$, the only bipartite graph possible is a $K_{1,n-1}$, which has size $n-1$ and clearly has a unique dominator if $n \geq 3$. In Section \ref{sect:Tightness}, we will provide a generalized bipartite construction to show this bound is tight. In Sections \ref{sect:n3g} and \ref{sect:g=2}, we prove the bound holds for the $n = 3\gamma$ and $\gamma = 2$ cases. In Section \ref{sect:FishConstruction}, we compare the bipartite bound to Fischermann's bound in order to quantify how many edges we lose when we require bipartiteness.
\section{Results}\label{sect:Results}
Let us begin by defining some notation and terminology. Haynes defines the private neighborhood of a vertex $v$ relative to a set of vertices $S$ as $pn(v, S) = \{u \in V : N(u) \cap S = \{v\}\}$ \cite{EPN}. For example, $u$ is a private neighbor of $v$ with respect to the set $S$ if $u$ is adjacent to $v$ and $u$ is not adjacent to any other vertex in $S$. Haynes also defines the exterior private neighborhood of a vertex with respect to a set as $epn(v, S) = pn(v, S) \cap (V - S)$. That is to say, if $u$ is a private neighbor of $v$ with respect to $S$, $u$ is also an exterior private neighbor if $u \notin S$. \cite{Roman_Dom}. Private neighbors will be important for proving the bipartite bound when $\gamma = 2$. They are also pivotal in the construction of a perfectly dominated graphs.
Next, we noted our bipartite bound conjecture is only true for $n \geq 3\gamma$. 
\begin{thm}\label{thm:2privateneighbs}
(From \cite{Original_Paper}) If a graph $G$ of order $n$ without isolated vertices has a unique minimum dominating
set $D$, then it is easy to see that the private exterior neighborhood $epn(v, D) = N(v, G)\setminus N[D\setminus\{v\}, G]$ of $v$ with respect to $D$ contains at least two vertices for each vertex $v \in D$. This observation implies that $n \geq 3\gamma$.
\end{thm}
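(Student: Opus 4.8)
The plan is to establish the two assertions in turn: first that $|epn(v,D)|\ge 2$ for every $v\in D$, and then that this forces $n\ge 3\gamma$ by a disjointness-and-counting argument. Essentially all the content lies in the first assertion; the second is a short consequence.

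For the first part I would argue by contradiction, assuming some $v\in D$ has $|epn(v,D)|\le 1$ and exhibiting either a smaller dominating set or a second minimum dominating set, thereby contradicting the minimality or the uniqueness of $D$. The natural move is a vertex swap: delete $v$ from $D$ and, if necessary, add one of $v$'s neighbours. I would split into cases according to whether $epn(v,D)=\emptyset$ or $epn(v,D)=\{u\}$, and within each according to whether $v$ has a neighbour inside $D$. The key observation that makes the swap work is that the vertices of $V-D$ which become undominated when $v$ is removed are precisely the exterior private neighbours $epn(v,D)$: every other vertex of $V-D$ retains a dominator in $D\setminus\{v\}$, and every vertex of $D$ dominates itself. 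Thus if $epn(v,D)=\emptyset$ and $v$ has a neighbour in $D$, then $D\setminus\{v\}$ already dominates $G$, contradicting minimality; and if $epn(v,D)=\{u\}$, or $epn(v,D)=\emptyset$ with $v$ having no neighbour in $D$, then $D'=(D\setminus\{v\})\cup\{u\}$ for a suitable neighbour $u$ of $v$ is a dominating set with $|D'|=|D|=\gamma$ and $D'\ne D$, contradicting uniqueness. Here I would invoke the no-isolated-vertices hypothesis to guarantee that such a neighbour $u$ exists.

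For the second part I would first record that the sets $epn(v,D)$, for $v\in D$, are pairwise disjoint subsets of $V-D$: by definition every $u\in epn(v,D)$ satisfies $N(u)\cap D=\{v\}$, so $u\notin D$ and $u$ lies in the exterior private neighbourhood of no other member of $D$. Summing the bound from the first part then gives $2\gamma\le\sum_{v\in D}|epn(v,D)|\le |V-D|=n-\gamma$, which rearranges to $n\ge 3\gamma$.

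The main obstacle is the case analysis in the first part. The delicate point is verifying that the swapped set $D'$ really dominates all of $G$ — in particular that removing $v$ undominates nothing beyond $epn(v,D)$, and that $v$ itself regains a dominator — while simultaneously checking that $D'\ne D$ and $|D'|=\gamma$, so that the contradiction is genuinely with uniqueness rather than a triviality. Once the swap is set up correctly, the counting in the second part is entirely routine.
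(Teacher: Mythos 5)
Your proposal is correct and takes essentially the same approach as the paper: a proof by contradiction that swaps $v$ out of $D$ for a suitable neighbour $u$, yielding a second minimum dominating set (or a smaller one), plus the disjointness-and-counting step for $n \geq 3\gamma$. In fact your write-up is more careful than the paper's own one-line proof, which swaps in an \emph{arbitrary} neighbour $u$ of $v$ — strictly speaking this fails when $epn(v,D)=\{u'\}$ with $u \neq u'$, since $u'$ would then be undominated; your case analysis choosing $u = u'$ (and handling $epn(v,D)=\emptyset$ separately, where minimality may be contradicted instead) repairs this, and you also supply the pairwise-disjointness counting argument for $n \geq 3\gamma$ that the paper leaves implicit.
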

\begin{proof}
    For the sake of contradiction, assume $v\in D$ has fewer than two exterior private neighbors with respect to the set $D$. Since $G$ contains no isolated vertices, $\exists u \in G$ such that $\{u,v\} \in E(G)$. Then the set $D \cup \{u\} \setminus \{v\}$ is another minimum dominating set, which renders $D$ not unique.
\end{proof}
Thus, throughout this paper, we analyze graphs whose order is at least three times its dominating size.

\subsection{Tightness of Bipartite Bound}\label{sect:Tightness}
In this section, we prove the bound given in Conjecture \ref{conj:MAINEDGE} is tight. To do so, we provide a generalized bipartite construction that is uniquely-dominatable and has the number of edges prescribed by the bound. First, we give a quick lemma that will aid in the final part of Theorem \ref{thm:Constructions}'s proof.
\begin{lemma}\label{lma:SmallerPosInts}
   Let $n$ be a positive integer. Let the number of positive integers smaller than $n$ and of the opposite parity (e.g. $9 \rightarrow \{2,4,6,8\}$) be $\tau(n)$. Then,
   \begin{itemize}
       \item$\tau(n) =  \ceil{\frac{n}{2}} = \frac{n}{2}$ if $n$ is even.
       \item $\tau(n) = \ceil{\frac{n}{2}} - 1 = \floor{\frac{n}{2}}$ if $n$ is odd.
   \end{itemize}
\end{lemma}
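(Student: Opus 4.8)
The plan is a direct count, split into two cases according to the parity of $n$. The key observation is that consecutive integers always have opposite parity, so the largest positive integer of opposite parity strictly below $n$ is simply $n-1$; moreover, the relevant integers form an arithmetic progression with common difference $2$, starting at the smallest positive integer of the opposite parity. Counting the length of this progression in each case is all that is required.

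First I would handle the even case. If $n$ is even, write $n = 2k$. Then ``opposite parity'' means odd, and the positive odd integers strictly less than $n$ are $1, 3, 5, \dots, 2k-1$. This progression has length $k$, so $\tau(n) = k = n/2$, which equals $\lceil n/2 \rceil$ since $n$ is even. This establishes the first bullet.

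Next I would handle the odd case. If $n$ is odd, write $n = 2k+1$. Then ``opposite parity'' means even, and the positive even integers strictly less than $n$ are $2, 4, \dots, 2k$, again a progression of length $k$. Hence $\tau(n) = k$, and since $n = 2k+1$ we have $\lfloor n/2 \rfloor = k = \lceil n/2 \rceil - 1$. This matches the second bullet, and the example $n = 9$ with $\{2,4,6,8\}$ illustrates the count.

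There is essentially no obstacle beyond bookkeeping: the only points requiring care are confirming that the top endpoint of each progression is $n-1$ (so that the counting omits no term) and then matching $k$ against the floor/ceiling expressions using the parity of $n$. Since both verifications are immediate, the proof is a short two-case computation.
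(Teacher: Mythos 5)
Your proof is correct. Note, however, that the paper does not actually supply an argument for this lemma: its ``proof'' consists of the single line that a simple induction is left to the reader. Your direct enumeration is therefore a genuinely different (and more complete) route. You identify the integers being counted as an arithmetic progression with common difference $2$ --- the odd integers $1,3,\dots,2k-1$ when $n=2k$, and the even integers $2,4,\dots,2k$ when $n=2k+1$ --- and read off the length $k$ in each case, matching it to $\lceil n/2\rceil$ or $\lfloor n/2\rfloor$ by parity. This closed-form count is arguably preferable to the induction the paper gestures at: an induction on $n$ is slightly awkward here because the target parity flips at every step (the set counted for $n+1$ is not obtained from the set counted for $n$ by appending one element, but by switching to the complementary parity class), so one either inducts in steps of $2$ or carries both parity statements simultaneously. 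Your approach sidesteps that bookkeeping entirely, at no cost in length, and verifies the endpoints ($n-1$ is always the largest element of opposite parity below $n$) explicitly.
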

\begin{proof}
   The simple induction proof is left to the reader.
\end{proof}
Now, we are ready to prove our bipartite bound is tight.
\begin{thm}\label{thm:Constructions}
 Let $\mathcal{B}_{n,\gamma}$ be the set of all uniquely-dominatable bipartite graphs without isolated vertices with order $n$ and dominating number $\gamma$. Then, for each choice of $n\geq 3\gamma$ and $\gamma \geq 2$, there exists a $G \in \mathcal{B}_{n,\gamma}$ where

    $$s(G) = m(n,\gamma) = 2 \gamma + 2 \Big\lceil \frac{\gamma}{2} \Big\rceil \Big\lfloor \frac{\gamma}{2} \Big\rfloor + \min\Big\{n-3\gamma, 2 \Big\lceil \frac{\gamma}{2} \Big\rceil - \Big\lfloor \frac{\gamma}{2}\Big\rfloor+ 1\Big\}\Big(2\Big\lceil\frac{\gamma}{2}\Big\rceil + 1\Big) + \sum_{i=1}^\Phi \Big((2\Big\lceil\frac{\gamma}{2}\Big\rceil+1) + \Big\lceil\frac{i}{2}\Big\rceil\Big)$$
and $\Phi(n,\gamma) = \max(0, n-3\gamma - 2\lceil\frac{\gamma}{2}\rceil - \lfloor\frac{\gamma}{2}\rfloor + 1).$
\end{thm}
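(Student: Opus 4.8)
The plan is to exhibit, for every pair $(n,\gamma)$ with $n \geq 3\gamma$ and $\gamma \geq 2$, an explicit bipartite graph $G = (X \cup Y, E)$ that is uniquely dominated, has order $n$ and domination number $\gamma$, and whose size is read off term by term from $m(n,\gamma)$. I would build $G$ in layers mirroring the four summands of the formula. The skeleton is the extremal graph for the base case $n = 3\gamma$: place $\lceil \gamma/2 \rceil$ dominators in $X$ and $\lfloor \gamma/2 \rfloor$ dominators in $Y$, and attach to each dominator exactly two private neighbors on the opposite side. This accounts for the $\gamma$ dominators, their $2\gamma$ private vertices, and the $2\gamma$ private-neighbor edges. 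The remaining base edges, $2\lceil\gamma/2\rceil\lfloor\gamma/2\rfloor$ in number, I would realize by completely joining the two dominator classes across the bipartition and then adding a prescribed set of edges between private neighbors lying on opposite sides, chosen so that no private neighbor ever acquires a second dominator as a neighbor.

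For $n > 3\gamma$ I would introduce the surplus vertices in two stages corresponding to the $\min\{\cdot\}$ term and the $\sum_{i=1}^{\Phi}$ term. In the first stage each new vertex is placed on the smaller side $X$ and joined to the $2\lceil\gamma/2\rceil$ private vertices on the opposite side together with one dominator there, contributing exactly $2\lceil\gamma/2\rceil + 1$ edges apiece; this stage runs for the $2\lceil\gamma/2\rceil - \lfloor\gamma/2\rfloor + 1$ vertices allowed before the opposite side can no longer absorb a full batch, which is precisely the cap inside the minimum. In the second stage, indexed by $i = 1, \dots, \Phi$, the two sides must grow together, and the $i$-th surplus vertex can additionally be joined to the previously added surplus vertices of opposite parity index; the count of new edges it gains is exactly $(2\lceil\gamma/2\rceil+1) + \lceil i/2 \rceil$, where the increment $\lceil i/2\rceil$ is supplied directly by Lemma \ref{lma:SmallerPosInts}. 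Summing the four contributions reproduces $m(n,\gamma)$, while bipartiteness, the absence of isolated vertices, and $|V(G)| = n$ are immediate from the construction.

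It then remains to certify $\gamma(G) = \gamma$ together with uniqueness. Throughout the layering I would maintain the invariant that every one of the $\gamma$ dominators retains at least two exterior private neighbors and that no vertex outside the dominating set $D$ is adjacent to two private neighbors of a single dominator. Because the two private neighbors of a dominator lie on the same side, bipartiteness forces them to be non-adjacent, so covering both of them without using that dominator requires at least two distinct vertices, whereas the dominator alone covers both. Hence no minimum dominating set can omit a dominator, giving $D \subseteq S$ for every minimum dominating set $S$; since $|S| = \gamma = |D|$, we get $S = D$. Applying the same private-pair obstruction to the $\gamma$ disjoint pairs simultaneously, as in Theorem \ref{thm:2privateneighbs}, also rules out any dominating set of size smaller than $\gamma$, so $\gamma(G) = \gamma$ exactly.

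The main obstacle is the domination bookkeeping in the second stage, where the surplus vertices acquire large degree: I must verify that joining them to many private neighbors and to one another neither creates a dominating set of size below $\gamma$ nor produces a second minimum dominating set, i.e.\ that the private-pair invariant genuinely survives every edge added. Once that invariant is secured, the remainder — collecting the four edge contributions and confirming they telescope to $m(n,\gamma)$ through the parity count of Lemma \ref{lma:SmallerPosInts} — is routine arithmetic, which I would organize as case checks on the parity of $\gamma$ and on which regime of $n$ applies.
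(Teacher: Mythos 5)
Your layered construction (the $P_3$-forest base, a batch stage realizing the $\min$ term, and an alternating-sides stage realizing the $\Phi$-sum) follows the same skeleton as the paper's three cases, but the engine you use to certify uniqueness is invalid, and this is a genuine gap rather than a detail. You argue: every dominator keeps two non-adjacent exterior private neighbors, no vertex outside $D$ is adjacent to both private neighbors of any single dominator, therefore covering an omitted dominator's private pair costs two distinct vertices, hence no minimum dominating set can omit a dominator, so $D \subseteq S$ and $S = D$. This inference is false. In the bipartite graph $C_6$ with vertices $v_1,\ldots,v_6$ and $D = \{v_1,v_4\}$, the vertices $v_2,v_6$ are private to $v_1$ and $v_3,v_5$ are private to $v_4$, each pair is non-adjacent, and no vertex outside $D$ sees both members of either pair; yet $\{v_2,v_5\}$ and $\{v_3,v_6\}$ are also minimum dominating sets. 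The flaw is that the two vertices covering one omitted dominator's private pair can simultaneously cover the private pair of a \emph{different} omitted dominator, while the omitted dominators themselves are covered by whatever dominators remain --- all the more easily in your construction, since you completely join the two dominator classes. That join is also exactly what destroys the tool the paper uses instead: in the paper's base graph $D$ stays independent and every non-dominator sees exactly one dominator, so distinct dominators have pairwise disjoint closed neighborhoods (Observation \ref{obs:NoCommonNeighbs}); this forces any $\gamma$-sized dominating set to take exactly one vertex from each $N[d]$ and sets up the slot-by-slot elimination ($k_i \neq b_{i,1}$ else $b_{i,2}$ is undominated, and so on). The paper realizes the $2\lceil\frac{\gamma}{2}\rceil\lfloor\frac{\gamma}{2}\rfloor$ extra base edges entirely between private neighbors of different dominators (each designated $b_{i,1}$ joined to all of $Y$), never between dominators. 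Your variant can probably be repaired by leaning on the degree-one second private neighbors, but not by the argument you give.

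Two further problems. First, your base-case edge selection is under-specified: the only constraint you impose (no private neighbor acquires a second dominator as a neighbor) is vacuously satisfied by \emph{any} set of private--private edges, so for $\gamma \geq 3$ it permits, say, placing two cross edges $\{p_1,q_1\}$ and $\{p_2,q_2\}$ on one adjacent dominator pair $u,v$ (where $p_1,p_2$ are the privates of $u$ and $q_1,q_2$ those of $v$) and none on some other pair; the total edge count is unchanged, but then $(D\setminus\{u,v\})\cup\{q_1,p_2\}$ is a second minimum dominating set. You would need something like ``exactly one cross edge per dominator pair, all incident to a designated first private neighbor.'' Second, you explicitly defer the verification that the $\Phi$-stage edges preserve both minimality and uniqueness; that verification is the substance of the paper's Case 3 (a vertex of $R''$ or $R'$ in a $\gamma$-set forces $b_{1,2}$, or $a_{1,1}$ and $a_{1,2}$, to go undominated), so as written the proposal leaves the essential part of the theorem unproven.
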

\begin{proof}
    We will break this proof into three cases.\\\\
    \textbf{Case 1:} $n = 3\gamma$.\\
    Notice in this case, the above bound simplifies to $$m(n,\gamma) = 2 \gamma + 2 \lceil \frac{\gamma}{2} \rceil \lfloor \frac{\gamma}{2} \rfloor.$$
    Let $G$ have partite sets $A$ and $B$. Let $A = Y \cup D_X$ and $B = X \cup D_Y$. Let $D_X = \{x_1, x_2, \ldots, x_{\lfloor \frac{\gamma}{2}\rfloor}\}$. Let $X = \{b_{1,1},b_{1,2}, b_{2,1},b_{2,2}, \ldots,b_{\lfloor \frac{\gamma}{2}\rfloor,1},b_{\lfloor \frac{\gamma}{2}\rfloor,2}\}$. Let $D_Y = \{y_1,y_2,\ldots,y_{\ceil{\frac{\gamma}{2}}}\}$. \\Let $Y = \{a_{1,1},a_{1,2}, a_{2,1},a_{2,2}, \ldots,a_{\lceil \frac{\gamma}{2}\rceil,1},a_{\lceil \frac{\gamma}{2}\rceil,2}\}$. Note, $|D_X| = \lfloor \frac{\gamma}{2}\rfloor$, $|D_Y| = \lceil \frac{\gamma}{2}\rceil$, $|X| = 2\lfloor \frac{\gamma}{2}\rfloor$, and $|Y| = 2\lceil \frac{\gamma}{2}\rceil$. So, $|A| = |Y| + |D_X| = \lceil\frac{3\gamma}{2}\rceil = \lceil\frac{n}{2}\rceil$, and, $|B| = |X| + |D_Y| = \lfloor\frac{3\gamma}{2}\rfloor = \lfloor\frac{n}{2}\rfloor$, so $|V(G)|$ indeed equals $3\gamma$.

    Now, let $\{b_{i,1},x_i\}\in E(G)$ and $\{b_{i,2},x_i\}\in E(G)$ for $i = 1,2,\ldots, \floor{\frac{\gamma}{2}}$.  Let $\{a_{j,1},y_j\}\in E(G)$ and $\{a_{j,2},y_j\}\in E(G)$ for $j = 1,2,\ldots, \ceil{\frac{\gamma}{2}}$.
    A generalized depiction of $G$ is given below.
    

\begin{figure}[H]
\centering
   \begin{tikzpicture}
   \tikzstyle{vertex}=[circle,fill=black,inner sep=2pt]
    \node[ellipse,
    draw = black,
    text = black,
    minimum width = 2cm, 
    minimum height = 4cm,label=above:{$A$}] (e) at (-2,0) {};
    
    \node[ellipse,
    draw = black,
    text = black,
    minimum width = 0.9cm, 
    minimum height = 1.8cm, label={[xshift=-6mm, yshift=-12mm]$Y$}] (e) at (-2,1) {};
     \node[ellipse,
    draw = black,
    text = black,
    minimum width = 0.9cm, 
    minimum height = 1.8cm, label={[xshift=-6.2mm, yshift=-4mm]$D_X$}] (e) at (-2,-1) {};
    
    \node[ellipse,
    draw = black,
    text = black,
    minimum width = 2cm, 
    minimum height = 4cm,,label=above:{$B$}] (e) at (2,0) {};
    \node[ellipse,
    draw = black,
    text = black,
    minimum width = 0.9cm, 
    minimum height = 1.8cm,label={[xshift=6.2mm, yshift=-12mm]$X$}] (e) at (2,1) {};
     \node[ellipse,
    draw = black,
    text = black,
    minimum width = 0.9cm, 
    minimum height = 1.8cm,label={[xshift=6.2mm, yshift=-4mm]$D_Y$}] (e) at (2,-1) {};

  \node [draw, vertex, label = above: $a_{j,1}$] (aj1) at (-2,1.2) {};
\node [draw, vertex, label = below: $a_{j,2}$] (aj2) at (-2,0.8) {};

\node [draw, vertex, label = above: $b_{i,1}$] (bi1) at (2,1.2) {};
\node [draw, vertex, label = below: $b_{i,2}$] (bi2) at (2,0.8) {};

\node [draw, vertex, label = below: $x_i$] (xi) at (-2,-1) {};
\node [draw, vertex, label = below: $y_j$] (yj) at (2,-1) {};

 \draw (bi2) -- (xi) -- (bi1); 
 \draw (aj2) -- (yj) -- (aj1); 
\end{tikzpicture}
\end{figure}
Let us pause and assess $G$ at this stage. Currently, $G$ is a forest of $P_3$'s. Since $|V(G)| = 3\gamma$, $G$ consists of $\gamma$ $P_3$'s. So, $s(G) = 2\gamma$. Since $G$ has $\gamma$ components, the dominating number of $G$ is bounded below by $\gamma$. Let $D = D_X \cup D_Y$. Notice $D$ dominates $G$, as the vertices of $D$ are exactly the center vertices of each $P_3$. Another observation is that $D_X$ dominates $X$ and $D_Y$ dominates $Y$. Finally, $D$ is a unique dominating set as there is only one way to dominate each $P_3$ with only one vertex.


Now, let $\{b_{i,1},a\}\in E(G)$ for each $a \in Y$ and for $i = 1,2,\ldots,\floor{\frac{\gamma}{2}}$. In other words, choose one neighbor of each vertex in $D_X$ to be adjacent to all of $Y$. For simplicity, we have chosen the ``first" neighbor of each vertex in $D_X$ according to our labeling. 
\begin{figure}[H]
\centering
   \begin{tikzpicture}
   \tikzstyle{vertex}=[circle,fill=black,inner sep=2pt]
    \node[ellipse,
    draw = black,
    text = black,
    minimum width = 2cm, 
    minimum height = 4cm,label=above:{$A$}] (e) at (-2,0) {};
    
    \node[ellipse,
    draw = black,
    text = black,
    minimum width = 0.9cm, 
    minimum height = 1.8cm, label={[xshift=-6mm, yshift=-12mm]$Y$}] (e) at (-2,1) {};
     \node[ellipse,
    draw = black,
    text = black,
    minimum width = 0.9cm, 
    minimum height = 1.8cm, label={[xshift=-6.2mm, yshift=-4mm]$D_X$}] (e) at (-2,-1) {};
    
    \node[ellipse,
    draw = black,
    text = black,
    minimum width = 2cm, 
    minimum height = 4cm,,label=above:{$B$}] (e) at (2,0) {};
    \node[ellipse,
    draw = black,
    text = black,
    minimum width = 0.9cm, 
    minimum height = 1.8cm,label={[xshift=6.2mm, yshift=-12mm]$X$}] (e) at (2,1) {};
     \node[ellipse,
    draw = black,
    text = black,
    minimum width = 0.9cm, 
    minimum height = 1.8cm,label={[xshift=6.2mm, yshift=-4mm]$D_Y$}] (e) at (2,-1) {};

  \node [draw, vertex, label = above: $a_{j,1}$] (aj1) at (-2,1.2) {};
\node [draw, vertex, label = below: $a_{j,2}$] (aj2) at (-2,0.8) {};

\node [draw, vertex, label = above: $b_{i,1}$] (bi1) at (2,1.2) {};
\node [draw, vertex, label = below: $b_{i,2}$] (bi2) at (2,0.8) {};

\node [draw, vertex, label = below: $x_i$] (xi) at (-2,-1) {};
\node [draw, vertex, label = below: $y_j$] (yj) at (2,-1) {};

 \draw (bi2) -- (xi) -- (bi1); 
 \draw (aj2) -- (yj) -- (aj1); 
 \draw (aj1) -- (bi1) -- (aj2);
\end{tikzpicture}
\end{figure}

This adds $|Y||D_X| = 2\ceil{\frac{\gamma}{2}}\floor{\frac{\gamma}{2}}$ edges. Thus, $s(G) = 2 \gamma + 2 \lceil \frac{\gamma}{2} \rceil \lfloor \frac{\gamma}{2} \rfloor$, which is our desired size. It remains to show that $D$ is still a UMD set. Consider the following observation:
\begin{obs}\label{obs:NoCommonNeighbs}
    Consider $\{d,d'\} \subseteq D$, two vertices of $D$. Notice that $N[d] \cap N[d'] = \emptyset$. So, each vertex we add to a potential dominating set of $G$ can dominate \textbf{at most one} vertex in $D$. Thus, $\gamma(G) \geq |D| = \gamma$.
\end{obs}
Thus, $D$ remains a minimum dominating set. Finally, we show that $D$ remains unique. By Observation \ref{obs:NoCommonNeighbs}, if we want to build a dominating set $K = \{k_1,k_2,\ldots,k_\gamma\}$ of size $\gamma$, each vertex of $K$ must be a vertex in the closed neighborhood of a different vertex of $D$. That is to say $k_i \in \{b_{i,1},b_{i,2},x_i\}$ for $i = 1,2,\ldots,\floor{\frac{\gamma}{2}}$ and $k_{\overline{j}} \in \{a_{j,1},a_{j,2},y_j\}$ for $\overline{j} = \floor{\frac{\gamma}{2}} + 1 ,\floor{\frac{\gamma}{2}} + 2, \ldots, \gamma$ and $j = \overline{j} - \floor{\frac{\gamma}{2}} = 1,2,\ldots,\ceil{\frac{\gamma}{2}}$ (up to relabeling). Now, for the sake of contradiction assume, for a given $i$, $k_i = b_{i,1}$. Notice, no other vertex in $K - \{k_i\}$ can dominate $b_{i,2}$. Thus, $k_i \not=b_{i,1}$. So, $k_i \in \{b_{i,2},x_i\}$ for $i = 1,2,\ldots,\floor{\frac{\gamma}{2}}$.

Now, for the sake of contradiction, assume $k_{\overline{j}} = a_{j,1}$ for some $\overline{j}$. Notice, no other vertex in $K - \{k_{\overline{j}}\}$ can dominate $a_{j,2}$. So, $k_{\overline{j}} \not=a_{j,1}$. A symmetric argument shows $k_{\overline{j}} \not=a_{j,2}$. 

Thus, we have $k_i \in \{b_{i,2},x_i\}$ and $k_{\overline{j}} = y_j$ (up to relabeling). Finally, if $k_i = b_{i,2}$, $b_{i,1}$ is not dominated by $K$. So, $K = D$. Since we assumed $K$ was an arbitrary dominating set of size $\gamma$, $D$ is unique. This completes this part of the proof.\\\\
\textbf{Case 2:} $3\gamma < n \leq 3\gamma + 2 \lceil \frac{\gamma}{2} \rceil - \lfloor \frac{\gamma}{2}\rfloor+ 1$.\\
Notice in this case, the above bound simplifies to $$m(n,\gamma) = 2 \gamma + 2 \Big\lceil \frac{\gamma}{2} \Big\rceil \Big\lfloor \frac{\gamma}{2} \Big\rfloor + (n-3\gamma)\Big(2\Big\lceil\frac{\gamma}{2}\Big\rceil + 1\Big).$$
Let $G$ be the graph we ended with in case 1. Let $C \subset B$ be an additional set of vertices where $|C| = n - 3\gamma$. Let $\{x_1,c\} \in E(G)$ for each $c \in C$. Let $\{c,a\} \in E(G)$ for each $c \in C$ \textbf{and} $a \in Y$. So, each vertex in $C$ has degree $|Y| + 1 = 2\ceil{\frac{\gamma}{2}} + 1$. Thus, we added $(n-3\gamma)(2\ceil{\frac{\gamma}{2}} + 1)$ edges to $G$, thus $s(G)$ is our desired size. 
Since $C$ is adjacent to $x_1 \in D$, $D$ is still a valid dominating set. Notice, Observation \ref{obs:NoCommonNeighbs} still applies, thus $\gamma(G) \geq |D|$, so $D$ is still a minimum dominating set. It remains to show that $D$ is unique. So, if we want to build a dominating set $K = \{k_1,k_2,\ldots,k_\gamma\}$ of size $\gamma$, each vertex of $K$ must be a vertex in the closed neighborhood of a different vertex of $D$. That is to say, $k_1 \in  \{b_{1,1},b_{1,2},x_1\} \cup C$ and $k_i \in \{b_{i,1},b_{i,2},x_i\}$ for $i = 2,\ldots,\floor{\frac{\gamma}{2}}$ (if such vertices exist, i.e. $\gamma \geq 4$) and $k_{\overline{j}} \in \{a_{j,1},a_{j,2},y_j\}$ for $\overline{j} = \floor{\frac{\gamma}{2}} + 1 ,\floor{\frac{\gamma}{2}} + 2, \ldots, \gamma$ and $j = \overline{j} - \floor{\frac{\gamma}{2}} = 1,2,\ldots,\ceil{\frac{\gamma}{2}}$ (up to relabeling). It follows that $k_1 \notin C$ as $K - k_1$ would not dominate $b_{1,2}$. Thus, $K \cap C = \emptyset$, which tells us that $D$ is still unique. This is because any set that did not dominate $G$ at the end of Case 1 cannot dominate $G$ here, so for another dominating set to exist other than $D$, it must include a vertex in $C$. We showed that $C$ cannot intersect with a minimum dominating set, which concludes the uniqueness argument. This leaves us with one remaining case.\\\\
\textbf{Case 3:} $n >  3\gamma + 2 \lceil \frac{\gamma}{2} \rceil - \lfloor \frac{\gamma}{2}\rfloor+ 1.$\\
Let $G$ be the graph we ended with in case 2 where $|C| = 2 \lceil \frac{\gamma}{2} \rceil - \lfloor \frac{\gamma}{2}\rfloor+ 1$. Additionally, we separate $X$ into $X_1$ and $X_2$. Let $X_1 = \{b_{1,1}, b_{2,1},\ldots\}$ and $X_2 = \{b_{1,2}, b_{2,2}, \ldots\}$. Notice, the vertices in $X_2$ all have degree 1 and are all adjacent to their corresponding vertex in $D_X$. Let $R = \{r_1,r_2,\ldots\}$ be the ``remaining" set of vertices to reach our desired order, where $|R| = n - (3\gamma + 2 \lceil \frac{\gamma}{2} \rceil - \lfloor \frac{\gamma}{2}\rfloor+ 1)$. Let $R = R' \cup R''$, where $R' = \{r_1,r_3,\ldots\}$ and $R''=\{r_2,r_4,\ldots\}$, where $R' \subset A$ and $R'' \subset B$. For reference, a bipartite depiction of the vertices in $G$ are given below.
\begin{figure}[H]
\centering
   \begin{tikzpicture}
    \node[ellipse,
    draw = black,
    text = black,
    minimum width = 3cm, 
    minimum height = 6cm,label=above:$A$] (e) at (-2,-1) {};
    
    \node[ellipse,
    draw = black,
    text = black,
    minimum width = 1cm, 
    minimum height = 2cm] (e) at (-2,1) {$Y$};
     \node[ellipse,
    draw = black,
    text = black,
    minimum width = 1cm, 
    minimum height = 2cm] (e) at (-2,-1) {$D_X$};
     \node[ellipse,
    draw = black,
    text = black,
    minimum width = 1cm, 
    minimum height = 2cm] (e) at (-2,-3) {$R'$};
    
    \node[ellipse,
    draw = black,
    text = black,
    minimum width = 4cm, 
    minimum height = 8cm,label=above:$B$] (e) at (2,-2) {};
    \node[ellipse,
    draw = black,
    text = black,
    minimum width = 1cm, 
    minimum height = 2cm,,label=right:$X$] (e) at (2,1) {};
    \node[ellipse,
    draw = black,
    text = black,
    minimum width = 0.5cm, 
    minimum height = 1cm,,label=center:$X_1$] (e) at (2,1.5) {};
    \node[ellipse,
    draw = black,
    text = black,
    minimum width = 0.5cm, 
    minimum height = 1cm,,label=center:$X_2$] (e) at (2,0.5) {};
     \node[ellipse,
    draw = black,
    text = black,
    minimum width = 1cm, 
    minimum height = 2cm] (e) at (2,-1) {$D_Y$};
     \node[ellipse,
    draw = black,
    text = black,
    minimum width = 1cm, 
    minimum height = 2cm] (e) at (2,-3) {$C$};
     \node[ellipse,
    draw = black,
    text = black,
    minimum width = 1cm, 
    minimum height = 2cm,label=center:$R''$] (e) at (2,-5) {};
\end{tikzpicture}
\end{figure}

Now, add the edges 
\begin{itemize}
    \item $\{r',c\}$ for $r' \in R'$ and $c \in C$ (Make a complete bipartite subgraph between $R'$ and $C$).
    \item $\{r',b\}$ for $r' \in R'$ and $b\in X_1$ (Make a complete bipartite subgraph between $R'$ and $X_1$).
    \item $\{r',y_1\}$ for $r' \in R'$.
\end{itemize}
Also add the edges
\begin{itemize}
    \item $\{r'', a\}$ for $r'' \in R''$ and $a \in Y$ (Make a complete bipartite subgraph between $R''$ and $Y$).
    \item $\{r'',x_1\}$ for $r'' \in R''$.
    \item $\{r',r''\}$ for $r' \in R'$ and $r'' \in R''$ (Make a complete bipartite subgraph between $R'$ and $R''$).
\end{itemize}
Notice $|V(G)| = 3\gamma + |C| + |R| = 3\gamma + 2 \lceil \frac{\gamma}{2} \rceil - \lfloor \frac{\gamma}{2}\rfloor+ 1 + (n - (3\gamma +2 \lceil \frac{\gamma}{2} \rceil - \lfloor \frac{\gamma}{2}\rfloor+ 1)) = n$. Now, notice every edge we have added to $G$ has at least one end in $R$ (and many have both ends in $R$). So, it suffices to find the total degree of all the vertices in $R$. So we do not double count edges within $R$, \textbf{when considering the degrees below, we will consider the edge $\{r_i,r_j\}$ where $i < j$ in the neighborhood of $r_j$ and not of $r_i$.} We will refer to these intentional omissions as the \textit{lower degree} of $r_i$
\begin{itemize}
    \item The lower degree of $r_1$ then is $|C| + |X_1| + 1 =$ $2\ceil{\frac{\gamma}{2}}+2$.
    \item The lower degree of $r_2$ is $|Y| + |\{r_1\}| + 1 = 2\ceil{\frac{\gamma}{2}}+2$.
    \item The lower degree of $r_3$ is $|C| + |X_1|+ |\{r_2\}| + 1 = 2\ceil{\frac{\gamma}{2}}+3$.
    \item The lower degree of $r_4$ is $|Y| + |\{r_1,r_3\}| + 1 = 2\ceil{\frac{\gamma}{2}}+3$. And so on.
\end{itemize}
In general, the lower degree of $r_i \in R'$ is $|C| + |X_1| + 1 + \tau(i) = 2\ceil{\frac{\gamma}{2}} + 2 + \ceil{\frac{i}{2}} - 1 = 2\ceil{\frac{\gamma}{2}} + 1 + \ceil{\frac{i}{2}}$ by Lemma \ref{lma:SmallerPosInts}. The lower degree of $r_i \in R''$ is $|Y| + 1 + \tau(i) =  2\ceil{\frac{\gamma}{2}} + 1 + \ceil{\frac{i}{2}}$ by Lemma \ref{lma:SmallerPosInts}. Therefore, we added $$\sum_{i=1}^{|R|}2\ceil{\frac{\gamma}{2}} + 1 + \ceil{\frac{i}{2}}$$ edges to $G$, which indeed equals our prescribed size stated in Theorem \ref{thm:Constructions}. It remains to show that $D$ is a UMD set.\\
Note, Observation \ref{obs:NoCommonNeighbs} still holds, so $D$ is still minimum. Since $D$ was a UMD set at the end of case 2, if another minimum dominating set were to exist here, it would have to include at least one vertex in $R$. If a vertex from $R''$ were to be in a minimum dominating set, it would mean that no other vertex in $N[x_1] = \{x_1,b_{1,1},b_{1,2}\} \cup C \cup R''$ can be in the same minimum dominating set (as this would leave us to find a $\gamma$ - 2 sized set to dominate $ D_Y \cup D_X - \{x_1\}$ which is impossible by Observation \ref{obs:NoCommonNeighbs} and the pigeon-hole principle). This would result in the vertex $b_{1,2}$ left undominated.\\
If a vertex from $R'$ were to be in a minimum dominating set, it would mean that no other vertex in $N[y_1] = \{y_1,a_{1,1},a_{1,2}\} \cup R'$ can be in the same minimum dominating set (as this would leave us to find a $\gamma$ - 2 sized set to dominate $ D_X \cup D_Y - \{y_1\}$ which is impossible by Observation \ref{obs:NoCommonNeighbs} and the pigeon-hole principle). Since vertices in $R'', C,X_1$, and $X_2$ also cannot be in a minimum dominating set, this choice would leave $a_{1,1}$ and $a_{1,2}$ undominated. Thus, vertices of $R$ cannot reside in any minimum dominating set of $G$, and $D$ is unique. This completes the proof, and leaves us with a generalized extremal construction for any choice of $n$ and $\gamma$.
\end{proof}
As is necessary for most generalized constructions, the graphs we create in the proof above are quite patterned. These patterns help us determine the size and minimum dominating set of each graph. However, they also provide a unique dominating set that is \textit{perfect}.
\begin{defn}\label{defn:PerfecrDom}
    Let $G$ be a simple graph without isolated vertices with order $n$ and dominating number $\gamma$. Let $D$ be a minimum dominating set of $G$. If $\sum_{x \in D} deg(x) = n - \gamma$, then $D$ is a perfect dominating set, and $G$ is perfectly dominated by $D$.
\end{defn}
Equivalently, if every vertex is either a dominator or is adjacent to exactly one dominator, and the dominating set forms an independent set, then the graph is perfectly dominated. Notice, this is the case for the general construction provided in the proof above. $D_X$ and $D_Y$ never have edges between each other, and the vertices in $G - \{D_X \cup D_Y\}$ are only adjacent to one dominator. Thus, the following corollary results. Note, a star graph, the only bipartite graph with a dominating number of 1, is a perfectly dominated graph (in fact every graph for which $\gamma(G) = 1$ is perfectly dominated).
\begin{cor}\label{cor:PerfectDomConstruction}
     Let $\mathcal{B}_{n,\gamma}$ be the set of all uniquely-dominatable bipartite graphs without isolated vertices with order $n$ and dominating number $\gamma$. Then, for each choice of $n\geq 3\gamma$ and $\gamma \geq 2$, there exists a perfectly dominated $G \in \mathcal{B}_{n,\gamma}$ where $$s(G) = 2 \gamma + 2 \Big\lceil \frac{\gamma}{2} \Big\rceil \Big\lfloor \frac{\gamma}{2} \Big\rfloor + \min\Big\{n-3\gamma, 2 \Big\lceil \frac{\gamma}{2} \Big\rceil - \Big\lfloor \frac{\gamma}{2}\Big\rfloor+ 1\Big\}\Big(2\Big\lceil\frac{\gamma}{2}\Big\rceil + 1\Big) + \sum_{i=1}^\Phi \Big((2\Big\lceil\frac{\gamma}{2}\Big\rceil+1) + \Big\lceil\frac{i}{2}\Big\rceil\Big)$$
where $\Phi(n,\gamma) = \max(0, n-3\gamma - 2\lceil\frac{\gamma}{2}\rceil - \lfloor\frac{\gamma}{2}\rfloor + 1).$
\end{cor}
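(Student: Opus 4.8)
The plan is to observe that Corollary \ref{cor:PerfectDomConstruction} follows almost immediately from Theorem \ref{thm:Constructions} together with the discussion of perfect domination that precedes it. The key realization is that the \emph{same} graph $G$ constructed in the proof of Theorem \ref{thm:Constructions} already achieves the prescribed size $m(n,\gamma)$ and, as the narrative between the two results points out, is in fact perfectly dominated by $D = D_X \cup D_Y$. So there is nothing new to construct; I would simply verify that the construction from each of the three cases satisfies Definition \ref{defn:PerfecrDom}.

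First I would recall the equivalent characterization stated just before the corollary: a graph is perfectly dominated by $D$ if $D$ is an independent set and every vertex outside $D$ is adjacent to exactly one vertex of $D$. Theorem \ref{thm:2privateneighbs} and Observation \ref{obs:NoCommonNeighbs} already give us that $N[d] \cap N[d'] = \emptyset$ for distinct $d, d' \in D$, which is precisely the statement that the closed neighborhoods of the dominators are pairwise disjoint. This single fact does most of the work: disjoint closed neighborhoods immediately force $D$ to be independent (no dominator lies in another dominator's neighborhood) and force each non-dominator to have at most one neighbor in $D$. Since $D$ dominates $G$, each non-dominator has at least one such neighbor, hence exactly one.

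Next I would confirm that Observation \ref{obs:NoCommonNeighbs} persists through all three cases of the construction. It is stated for Case 1 and explicitly reasserted to ``still apply'' in Case 2 and to ``still hold'' in Case 3. Thus for the graph $G$ produced by Theorem \ref{thm:Constructions} at any value of $n \geq 3\gamma$, the dominators have pairwise-disjoint closed neighborhoods, so the equivalent characterization is met and $G$ is perfectly dominated by $D$. I would then note, as an equivalent degree-sum check matching Definition \ref{defn:PerfecrDom} verbatim, that $\sum_{x \in D} \deg(x)$ counts exactly the $n - \gamma$ non-dominator vertices once each, precisely because each non-dominator is adjacent to exactly one dominator and $D$ is independent. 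Since the size of this $G$ equals $m(n,\gamma)$ by Theorem \ref{thm:Constructions}, the corollary follows.

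I do not expect a genuine obstacle here, since the corollary is essentially a repackaging of the already-proved theorem with an added property that the construction manifestly enjoys. The only point requiring care is bookkeeping: I would make sure that in Case 3 the newly added vertices in $R$ do not create a second dominator-neighbor for any vertex, i.e.\ that no vertex of $R$ is adjacent to two vertices of $D$ and that edges among $R$, $C$, $X_1$, $Y$ do not disturb the disjointness of the dominator neighborhoods. Checking the edge list in Case 3 shows every added edge has an endpoint in $R$, and $R$ is disjoint from $D$, so no dominator gains a neighbor that is shared with another dominator; this preserves Observation \ref{obs:NoCommonNeighbs} and hence the perfect-domination property.
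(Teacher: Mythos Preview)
Your proposal is correct and takes essentially the same approach as the paper: the paper's proof is the single sentence ``The graph construction given in the proof of Theorem \ref{thm:Constructions} is perfectly dominated,'' and you simply flesh out why, via Observation \ref{obs:NoCommonNeighbs} and the equivalent characterization of perfect domination. Your added justification that disjoint closed neighborhoods force both independence of $D$ and a unique dominator-neighbor for each non-dominator is a sound expansion of what the paper leaves implicit.
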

\begin{proof}
    The graph construction given the proof of Theorem \ref{thm:Constructions} is perfectly dominated.
\end{proof}

It seems counterintuitive that the most number of edges you can pack into a graph can always be done in such a restrictive manner as a perfect domination. In fact, the \textbf{fewest} number of edges you can have in a graph with order $n$ and minimum dominating set $\gamma$ is $n-\gamma$. So, it would appear that one extra edge between the two partite dominating sets would not cause our uniqueness condition to fail. Analyzing these graphs, it becomes clear that the force behind the uniqueness of this perfect dominating construction lies in the fact that the unique dominating set is impossible to dominate if they are not the dominators themselves, which renders the set unique. Contrary to Fischermann's bound, there are many choices for $n$ and $\gamma$ for which the only construction that meets our bipartite bound is a perfectly dominated graph. The graph in figure \ref{fig:10-3ExGraph} is the only bipartite graph on 10 vertices and 15 edges (the bound) which has a unique minimum dominating set. The graph is also perfectly dominated. We have began to categorize which extremal UMD set graphs have only one construction - a perfectly dominated one - but this topic is beyond the scope of this project.
\begin{figure}[H]
    \centering
    \begin{tikzpicture}
        \tikzstyle{white}=[circle,fill=black,inner sep=2pt]
        \node [draw, white, label=left:$x_1$] (x1) at (0,-4) {};
        \node [draw, white, label=left:$a_{1,1}$] (a11) at (0,0) {};
        \node [draw, white, label=left:$a_{1,2}$] (a12) at (0,-1) {};
         \node [draw, white, label=left:$a_{2,1}$] (a21) at (0,-2) {};
        \node [draw, white, label=left:$a_{2,2}$] (a22) at (0,-3) {};
         \node [draw, white, label=right:$b_{1,1}$] (b11) at (3,0) {};
        \node [draw, white, label=right:$b_{1,2}$] (b12) at (3,-1) {};
        \node [draw, white, label=right:$y_1$] (y1) at (3,-2) {};
        \node [draw, white, label=right:$y_2$] (y2) at (3,-3) {};
        \node [draw, white, label=right:$c_1$] (c1) at (3,-4) {};

         \draw (a11) -- (y1) -- (a12); 
          \draw (a21) -- (y2) -- (a22); 
           \draw (b11) -- (x1) -- (b12); 
            \draw (c1) -- (x1);  
            \draw (a12) -- (c1) -- (a11);
             \draw (a11) -- (b11) -- (a12); 
              \draw (a21) -- (b11) -- (a22); 
                \draw (a21) -- (c1) -- (a22); 
    \end{tikzpicture}
    \caption{The only uniquely dominatable bipartite graph with $n=10$, $\gamma = 3$ and $s = m(10,3) = 15$.}
    \label{fig:10-3ExGraph}
\end{figure}
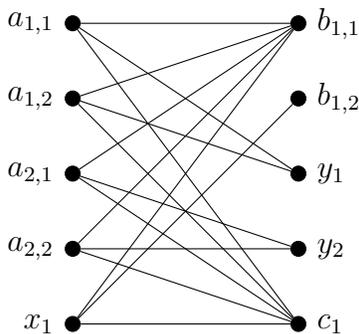
\subsection{The Bipartite Bound when $\gamma(G) = 2$}\label{sect:g=2}
In this section, we prove the bipartite bound is exact when $\gamma(G) = 2$. First, we provide a seemingly unrelated lemma that will aid in this proof.
\begin{lemma}\label{lma:MinEdgesForest}
Let $G$ be a graph with $n \geq 3$ that has no isolated vertices and no $P_2$ components. The \textbf{minimum} number of edges $G$ can have is given by $$s(G) \geq \Big\lceil\frac{2n}{3}\Big\rceil.$$
\end{lemma}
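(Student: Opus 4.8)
The plan is to decompose $G$ into its connected components and bound the edges contributed by each one separately. First I would observe that the two hypotheses force every component to have at least three vertices: because $G$ has no isolated vertices, no component is a single vertex, and because $G$ has no $P_2$ component, no component is a single edge (the only connected graph on two vertices being $P_2$ itself). So if $G$ has components spanning $k_1, k_2, \ldots, k_t$ vertices, then each $k_j \geq 3$ and $\sum_{j=1}^t k_j = n$, and it suffices to bound the edges inside each component.

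Next I would invoke the elementary fact that any connected graph on $k$ vertices has at least $k-1$ edges, a spanning tree being the sparsest connected subgraph. Writing $e_j$ for the number of edges in the $j$-th component, this gives $e_j \geq k_j - 1$. The crux of the argument is then the inequality $k_j - 1 \geq \frac{2k_j}{3}$, which holds precisely when $k_j \geq 3$, exactly the regime guaranteed by the first step. Summing over all components yields
$$ s(G) = \sum_{j=1}^t e_j \geq \sum_{j=1}^t (k_j - 1) \geq \sum_{j=1}^t \frac{2k_j}{3} = \frac{2n}{3}. $$
Since $s(G)$ is an integer, this immediately upgrades to $s(G) \geq \ceil{\frac{2n}{3}}$, which is the claimed bound.

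There is no genuine obstacle here; if anything, the only subtlety is the final integrality step, where one must pass from the real-valued estimate $\frac{2n}{3}$ to its ceiling. To confirm the bound cannot be improved, I would exhibit a disjoint union of copies of $P_3$ (each contributing two edges on three vertices, so meeting $k_j - 1 = \frac{2k_j}{3}$ with equality), padding with a single slightly larger tree component when $n \not\equiv 0 \pmod 3$; such a graph satisfies all the hypotheses and attains exactly $\ceil{\frac{2n}{3}}$ edges. This shows the per-component inequality $k_j - 1 \geq \frac{2k_j}{3}$ is sharp whenever every component is a $P_3$, and hence that the lemma is best possible.
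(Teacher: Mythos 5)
Your proof is correct, but it takes a genuinely different route from the paper's. The paper first reduces to forests (arguing that any non-tree component contains a deletable non-cut edge), then uses the exact edge count of a forest in terms of its number of components, and minimizes the edge count by maximizing the number of components --- asserting that the optimum is achieved by packing as many $P_3$'s as possible --- before finishing with the arithmetic identity $\lceil \frac{2n}{3}\rceil = 2\lfloor \frac{n}{3}\rfloor + (n \bmod 3)$. You avoid both the reduction and the optimization entirely: you bound each component from below by its spanning tree, $e_j \geq k_j - 1$, observe that $k_j - 1 \geq \frac{2k_j}{3}$ exactly when $k_j \geq 3$ (which the hypotheses guarantee), sum over components, and invoke integrality of $s(G)$ to pass from $\frac{2n}{3}$ to $\lceil\frac{2n}{3}\rceil$. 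Your approach is arguably tighter as a piece of reasoning: it sidesteps the paper's least rigorous step (the claim that the component-maximizing configuration is ``clearly'' a packing of $P_3$'s) and needs no case analysis on $n \bmod 3$, since the per-component inequality plus integrality does all the work. What the paper's method buys in exchange is the exact value of the minimum and the structure of the extremal graphs, which you recover anyway in your closing tightness remark via disjoint unions of $P_3$'s padded by one larger tree. One small note: the tightness discussion is a bonus, not required by the statement as written, but it is worth keeping since the lemma's phrasing (``the minimum number of edges $G$ can have'') implicitly promises attainability.
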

\begin{proof}
    First, note each component must be a tree. For the sake of contradiction, assume that $G_1$ is a connected component of $G$ that is not a tree. Thus, $|V(G_1)| \geq 3$. Let $e$ be a non-cut edge of $G_1$. Then $G_1 - e$ is still a connected component, and is not a $P_2$ (as it has at least three connected vertices). So, $G-e$ has fewer edges than $G$ while retaining the two criteria.\\
    So, $G$ is a forest. Let $G$ have $c$ connected components. The size of $G$ (and every forest) is $s(G) = n - 1 - c$. So, to minimize $s(G)$, we want to maximize $c$, i.e. have the most non-$P_2$ components possible. This is clearly achieved by creating the most $P_3$'s as possible, and appending the final one or two vertices (if they exist) onto any of the existing components. This results in $$s(G) = 2\lfloor\frac{n}{3}\rfloor + \text{mod}(n,3).$$
    It suffices to show that $\lceil\frac{2n}{3}\rceil = 2\lfloor\frac{n}{3}\rfloor + \text{mod}(n,3)$. Let $n = 3k + r$, then $\lceil\frac{2n}{3}\rceil = 2k + r = 2\lfloor\frac{n}{3}\rfloor + \text{mod}(n,3)$
 \end{proof}
 Now, recall the complement of a simple graph $G$, $G^c$. The edges of $G^c$ are exactly the non-edges of $G$. We define a bipartite complement, where instead of all non-edges of $G$, the bipartite complement only consists of all non-edges between the two partite sets.
 \begin{defn}\label{def:NonEdges}
     Let $G$ be a bipartite graph with partite sets $A$ and $B$. The \textbf{Bipartite Complement} of a graph, $\mathbf{BC}(G)$ is the graph with vertex set $A \cup B$ and edge set $\Big\{\{a,b\}\,:\,a\in A, b\in B, $ and $\{a,b\}\notin E(G)\Big\}$. The edges of $\mathbf{BC}(G)$ will be referred to as non-partite-edges of $G$, or simply non-edges if the graph is understood to be bipartite.
 \end{defn}
Next, let us simplify our bipartite bound when $\gamma = 2$ is plugged in.
\begin{state}\label{state:g2simplified}
    When $\gamma = 2$, the bound given in Conjecture \ref{conj:MAINEDGE} simplifies to
    $$m(n,2) = \begin{cases}
        \frac{n(n-2)}{4}, \text{     if $n$ is even}\\
        \frac{(n-1)^2}{4},\text{     if $n$ is odd.}
    \end{cases}$$
\end{state}
\begin{proof}
    To begin our simplification, notice $m(6,2) = 6$ and $m(7,2) = 9$ when plugged directly into the bipartite bound. Now, when $n \geq 8$, the bipartite bound becomes 
    \begin{align}\label{eq:Semi-Simple}
        m(n,2) &= 12 + \sum_{i=1}^{n-8} (3 + \ceil{\frac{i}{2}})\\
        &= 12 + 3(n-8) + \bc{\sum_{i=1}^{n-8} \ceil{\frac{i}{2}}}.
        \label{eq:HighlightedSum}
    \end{align}
Let us simplify the highlighted summation. If $n$ is even, let $2k = n-8$. Then, the summation becomes
\begin{align*}
    \sum_{i=1}^{2k}\ceil{\frac{i}{2}} &= (1 + 1 + 2 + 2 + \cdots + k + k)\\
    &= 2(1 + 2 + \cdots + k)\\
    &= 2 {k+1\choose 2}\\
    &= 2 \frac{(k+1)(k)}{2}\\
    &= \frac{(n-6)(n-8)}{4}
\end{align*}
after plugging $k = \frac{n-8}{2}$ back in. Plugging this result into equation \ref{eq:Semi-Simple}, we get
\begin{align*}
    m(n,2) &= 12 + 3(n-8) + \frac{(n-6)(n-8)}{4}\\
    &= \frac{\cancel{48} + 12n \cancel{- 96} + n^2 - 14n \cancel{+ 48}}{4}\\
   m(n,2) &= \frac{n(n-2)}{4} \text{  when } n \text{ is even.}
\end{align*}
If $n$ is odd, let $2k + 1 = n - 8$. Then, the highlighted summation in equation \ref{eq:HighlightedSum} becomes
\begin{align*}
     \sum_{i=1}^{2k}\ceil{\frac{i}{2}} &= (1 + 1 + 2 + 2 + \cdots + k + k + (k+1))\\
     &= {k + 1\choose 2} + {k+2\choose 2}\\
     &= \frac{2k^2 + 4k + 2}{2}\\
     &= \frac{n^2 - 14n + 49}{4}
\end{align*}
after plugging $k = \frac{n-9}{2}$ back in. Plugging this result into equation \ref{eq:Semi-Simple}, we get
\begin{align*}
    m(n,2) &= 12 + 3(n-8) + \frac{n^2 - 14n + 49}{4}\\
    &= \frac{n^2 +12n - 14n + 49 - 48}{4}\\
   m(n,2) &= \frac{(n-1)^2}{4}, \text{ when } n \text{ is odd.}
\end{align*}
Finally, we see that $m(6,2) = \frac{6(4)}{4} = 6$ and $m(7,2) = \frac{6^2}{4} = 9$ both correctly plug into their respective equations above. So, the simplification is correct for all $n \geq 3\gamma$, completing the proof.
\end{proof}
Now are ready to show the bipartite bound is true when $\gamma = 2$.
\begin{thm}
    Let $G_{n,2} = G$ be a bipartite graph without isolated vertices with order $n \geq 6$, $\gamma = 2$, and a unique dominating set. Then, the size of $G$ is bounded above by $$m(n,2) = m(G) = \begin{cases}
        \frac{n(n-2)}{4}, \text{     if $n$ is even}\\
        \frac{(n-1)^2}{4},\text{     if $n$ is odd.}
    \end{cases}$$
\end{thm}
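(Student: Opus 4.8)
The plan is to fix a unique minimum dominating set $D = \{d_1, d_2\}$ (of size $\gamma = 2$) and split on whether $d_1$ and $d_2$ lie in the same partite set. If they share a partite set, say both lie in $A$, then every vertex of $A \setminus D$ has all its neighbors in $B$ and so cannot be dominated by $D$; hence $A = D$ and $G$ is a subgraph of $K_{2,n-2}$. Writing $B_1, B_2, B_{12}$ for the vertices of $B$ adjacent to only $d_1$, only $d_2$, or both, Theorem~\ref{thm:2privateneighbs} forces $|B_1|, |B_2| \ge 2$, so the edge count is $|B_1| + |B_2| + 2|B_{12}| = (n-2) + |B_{12}| \le 2n - 8$. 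Since $n^2 - 10n + 32 > 0$ for all $n$, this is strictly below $m(n,2)$, so the same-side case never attains the bound and may be set aside.

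The substantive case is $d_1 \in A$, $d_2 \in B$. Here domination forces every vertex of $A' := A \setminus \{d_1\}$ to be adjacent to $d_2$ and every vertex of $B' := B \setminus \{d_2\}$ to be adjacent to $d_1$, and Theorem~\ref{thm:2privateneighbs} gives $|A| = a \ge 3$ and $|B| = b \ge 3$. I would then account for the edges in four groups: the $a-1$ edges from $d_2$ to $A'$, the $b-1$ edges from $d_1$ to $B'$, the optional edge $d_1 d_2$ (write $\epsilon \in \{0,1\}$ for its presence), and the edges inside $A' \times B'$. Letting $\overline{f}$ be the number of non-edges inside $A' \times B'$, i.e. the number of edges of the bipartite complement $\mathbf{BC}(G)$ restricted to $A' \cup B'$ (Definition~\ref{def:NonEdges}), a short computation gives the clean identity $s(G) = ab - 1 + \epsilon - \overline{f}$. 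The whole problem thus reduces to bounding $\overline{f}$ from below.

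The key step is to read the uniqueness of $D$ off the structure of $H := \mathbf{BC}(G)$ on $A' \cup B'$. Enumerating the size-two candidate dominating sets shows that two vertices on the same side cannot dominate (since $a, b \ge 3$ leaves an undominated vertex), so the only competitors are cross pairs. A direct check shows that a pair $u \in A'$, $w \in B'$ dominates $G$ precisely when $u$ has at most the single non-neighbor $w$ in $B'$ and $w$ has at most the single non-neighbor $u$ in $A'$; in $H$-language this says exactly that $H$ contains an isolated vertex on each side or an isolated edge, while the pairs $\{d_1, w\}$ and $\{d_2, u\}$ can dominate only when $\epsilon = 1$ and the relevant vertex is $H$-isolated. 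Hence uniqueness forces: $H$ has no isolated edge; $H$ does not have an isolated vertex on both sides simultaneously; and, when $\epsilon = 1$, $H$ has no isolated vertex at all. The first two conditions give $\overline{f} \ge \min(a-1, b-1)$, while the third, combined with the no-isolated-edge condition, lets me invoke Lemma~\ref{lma:MinEdgesForest} on the $(n-2)$-vertex graph $H$ to obtain $\overline{f} \ge \lceil 2(n-2)/3 \rceil$.

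It then remains to optimize over $a + b = n$. When $\epsilon = 0$, $s(G) \le ab - 1 - \min(a-1,b-1) = \min(a,b)\,(n - 1 - \min(a,b))$, a downward parabola in $\min(a,b)$ whose maximum over the admissible range $3 \le \min(a,b) \le \lfloor n/2 \rfloor$ is exactly $m(n,2)$, attained at the balanced or near-balanced split and matching the values computed in Statement~\ref{state:g2simplified}. When $\epsilon = 1$, $s(G) \le \lfloor n^2/4 \rfloor - \lceil 2(n-2)/3 \rceil$, and a direct comparison shows this is at most $m(n,2)$ for every $n \ge 6$. I expect the main obstacle to be the third paragraph: correctly enumerating every alternative two-element dominating set and packaging the resulting constraints into the three clean conditions on $H$, together with the bookkeeping needed to be sure that the edge $d_1 d_2$ (the parameter $\epsilon$) is handled consistently in both the edge count and the uniqueness conditions.
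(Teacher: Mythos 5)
Your proposal is correct and follows essentially the same route as the paper's proof: the same case split on the positions of the two dominators (same partite set ruled out by a crude count, then cross cases), the same reformulation of uniqueness as structural conditions on the bipartite complement (no isolated vertex on each side simultaneously, no isolated edge, and no isolated vertex at all when the dominators are adjacent), the same application of Lemma~\ref{lma:MinEdgesForest}, and the same balanced-product optimization. The only difference is organizational---you merge the adjacent and non-adjacent cross cases into the single identity $s(G) = ab - 1 + \epsilon - \overline{f}$ and defer the split on $\epsilon$ to the end, which is a cleaner but equivalent packaging of the paper's Parts 2 and 3.
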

\begin{proof}
    Let $D = \{x,y\}$. Throughout this proof, let $D \subseteq V(G)$ be the unique dominators of $G$.  By Theorem \ref{thm:2privateneighbs}, in every case of this proof, it must be true that $|epn(x,D)| \geq 2$ and $|epn(y,D)| \geq 2$.
    
    So, there are three cases we must consider:
\proofpart{$x$ and $y$ are in the same partite set.}
Let $A$ and $B$ be the partite sets of $G$. Let $D = \{x,y\} \in A$. Since $D$ dominates $G$, then $A$ must consist only of $\{x,y\}$, as any other vertex in $A$ would not be adjacent to $x$ or $y$. So, $|A| = 2$, and $|B| = (n-2)$. In this case, a complete bipartite graph would have $2(n-2)$ edges. However, we know that $|epn(x,D)| \geq 2$ and $|epn(y,D)| \geq 2$. So, there are at least 4 edges that cannot exist between $A$ and $B$. Thus, when $x$ and $y$ are in the same partite set the size of $G$ is bounded above by $$m_1(G) = 2(n-2) - 4.$$ It remains to show that $m_1(G) \leq m(G).$\\
Let $n$ be even. Let $P_{1e}(n) = m(G) - m_1(G)$. We aim to show $P_{1e}(n) \geq 0$ when $n \geq 6$ (and even). Notice, the polynomial simplifies to \begin{align*}
P_{1e}(n) &= n^2 - 10n + 32  \\  
&= n^2 - 10n +25 + 7\\
&= (n-5)^2 + 7
\end{align*}
after completing the square. It is obvious from here that $P_{1e}(n) \geq 0$ for $n \geq 6$.\\
Similarly, let $n$ be odd.  Let $P_{1o}(n) = m(G) - m_1(G)$. We aim to show $P_{1o}(n) \geq 0$ when $n \geq 6$ (and odd). Notice, the polynomial simplifies to \begin{align*}
P_{1o}(n) &= n^2 - 10n + 33  \\  
&= n^2 - 10n +25 + 8\\
&= (n-5)^2 + 8
\end{align*}
after completing the square. It is obvious from here that $P_{1o}(n) \geq 0$ for $n \geq 6$.\\
\proofpart{$x$ and $y$ are adjacent.}
Let $A$ and $B$ be the partite sets of $G$. Let $x \in A$, and $y \in B$. For simplicity, let $X = \{b_1,b_2,\ldots\} = B - \{y\}$ and $Y = \{a_1,a_2,\ldots\} = A - \{x\}$. So, $X \subset N(x)$ and $Y \subset N(y)$. Now, there are two conditions that the vertices in $X$ and $Y$ must meet:\\
\textbf{Condition 1:} For every $a \in Y$ there must exist a $b \in X$ such that $\{a,b\} \notin E(G)$. Similarly, for every $b \in X$ there must exist a $a \in Y$ such that $\{b,a\} \notin E(G)$. It follows that $\deg(a) \leq |X|$ and $\deg(b) \leq |Y|$.\\
\textbf{Proof of Condition 1:} For the sake of contradiction, Let $a \in Y$ be adjacent to all of $X$. Then, $\{y,a\}$ dominate $\{x,y,X,Y\} = V(G)$, thus $D$ is not unique. Let $b \in X$ be adjacent to all of $Y$. Then, $\{x,b\}$ dominate $\{x,y,X,Y\} = V(G)$, thus $D$ is not unique.\\
\textbf{Condition 2:} For every $a \in Y$ and $b \in X$ for which $\{a,b\} \notin E(G)$, there must also exist an $a' \in Y$  or a $b' \in X$ such that $\{a,b'\} \notin E(G)$ or $\{a',b\} \notin E(G)$.\\
\textbf{Proof of Condition 2:} For the sake of contradiction, Let $a \in Y$ be adjacent to all of $X - b$, and $b \in X$ be adjacent to all of $Y - a$. Then, $\{a,b\}$ dominate $\{x,y,X,Y\}$, and $D$ is not unique.

So, we want to consider the maximum number of edges $G$ can have while satisfying these conditions. First, notice that $x$ is already adjacent to all of $B$, and $y$ is already adjacent to all of $A$. So, maximizing the edges of $G$ boils down to considering the edges in the induced subgraph $G-D$, i.e. the edges between $X$ and $Y$. Maximizing the edges between $X$ and $Y$ is analogous to minimizing the edges between $X$ and $Y$ in its bipartite complement. Consider the makeup of $\mathbf{BC}(G -D)$. First, there are no isolated vertices by Condition 1. Next, there are no $P_2$'s by Condition 2. Thus, by Lemma \ref{lma:MinEdgesForest} $$s(\mathbf{BC}(G-D)) \geq \bigg\lceil \frac{2(|X| + |Y|)}{3}\bigg\rceil.$$ Thus, $$s(G-D) \leq |X||Y| - \bigg\lceil \frac{2(|X| + |Y|)}{3}\bigg\rceil.$$
Noticing that $\deg(x) = |X| + 1$ and $\deg(y) = |Y| + 1$ (but they share an edge between each other), so the size of $G$ is can be written as 
\begin{align}\label{eq:xyAdjBound}
    s(G) \leq |X| + |Y| + 1 + |X||Y| - \bigg\lceil \frac{2(|X| + |Y|)}{3}\bigg\rceil.
\end{align}
Since the value in the ceiling function is always positive, we can simplify our future arithmetic by dropping the ceiling function (doing so only makes the resulting bound larger). By construction, we know $|X| + |Y| = (n-2)$, however $|X|$ and $|Y|$ can vary. Since $|X| + |Y|$ is a fixed sum, to maximize $|X||Y|$, we want $|X|$ and $|Y|$ to be as close as possible. So, to maximize the right hand side of Equation \ref{eq:xyAdjBound}, $|X||Y| = \lceil\frac{n-2}{2}\rceil\lfloor\frac{n-2}{2}\rfloor$

Thus, an upper bound on the number of edges $G$ can have when $x$ and $y$ are adjacent is given by $$m_2(G) = n - 1 + \Big\lceil\frac{n-2}{2}\Big\rceil\Big\lfloor\frac{n-2}{2}\Big\rfloor - \frac{2(n-2)}{3}.$$
Now, $\lceil\frac{n-2}{2}\rceil\lfloor\frac{n-2}{2}\rfloor$ will behave differently depending on if $n$ is even or odd. If $n$ is even, \begin{align*}
    m_2(G) &= n-1 + \frac{n^2 - 4n + 4}{4} - \frac{2(n-2)}{3}\\
    &= \frac{\cancel{12n}  \cancel{- 12} + 3n^2 \cancel{- 12n} + \cancel{12} - 8n + 8}{12}\\
    m_2(G)&= \frac{3n^2 - 6n - (2n-8)}{12}\label{eq:m_2even}
\end{align*}
If $n$ is odd,
\begin{align*}
     m_2(G) &= n-1 + \frac{n^2 - 4n + 3}{4} - \frac{2(n-2)}{3}\\
    &= \frac{\cancel{12n}  {- 12} + 3n^2 \cancel{- 12n} + {9} - 8n + 8}{12}\\
    &= \frac{3n^2 - 3 - 8n + 8}{12} \\
    m_2(G)&= \frac{3n^2 - 6n + 3 - (2n-2)}{12}.
\end{align*}
So, \begin{align*}
    m_2(G) = \begin{cases}
        \frac{3n^2 - 6n - (2n-8)}{12}, \text{  if $n$ is even}\\
        \frac{3n^2 - 6n + 3 - (2n-2)}{12}, \text{ if $n$ is odd.}
    \end{cases}
\end{align*}
Rewriting $m(G)$ above to have the same denominator, we get
\begin{align*}
    m(G) = \begin{cases}
        \frac{3n^2 - 6n}{12}, \text{     if $n$ is even}\\
        \frac{3n^2 - 6n + 3}{12},\text{     if $n$ is odd.}
    \end{cases}
\end{align*}
Writing $m_2(G)$ and $m(G)$ in this manner, it is clear that $m_2(G) \leq m(G)$ (since $n \geq 6$).\\
\proofpart{$x$ and $y$ are non-adjacent in different partite sets.}The final case is very similar to the case above, however $x$ and $y$ are no longer adjacent. Recall the conditions from Part 2 of this proof. Notice Condition 1 above relaxes without an edge between $x$ and $y$. Now, we need only require every vertex in one set (either $|X|$ or $|Y|$) to have a non-edge to the other partite set. More formally,\\
\textbf{Condition 1:} Let $Q = \mathbf{BC}(G - D)$, where the partite sets of $Q$ remain $X$ and $Y$. Then, in $Q$, either the $\deg(b) \geq 1$ for all $b \in X$ or $\deg(a) \geq 1$ for all $a \in Y$.\\
\textbf{Proof of Condition 1:} For the sake of contradiction, Let $a \in Y$ be adjacent to all of $X$ and $b \in X$ be adjacent to all of $Y$. Then $\{a,b\}$ dominates $\{x,y,X,Y\} = V(G)$, and $D$ is not unique.

It is worth noting, in the proof above, if you remove the edge between $a$ and $b$ they still dominate the graph. So, Condition 2 from Part 2 of this proof still applies here. However, given the change in the first condition, Condition 2 does not cause fewer edges to be possible, so we need not consider it here. 

So, we once again consider the maximum number of edges possible given this construction. Notice $\deg(x) = |X|$, and $\deg(y) = |Y|$. Without loss of generality, assume $Y$ is the set that satisfies Condition 1 above (every vertex in $Y$ has at least one non-edge to $X$). Then $$s(G) \leq |X| + |Y| + |X||Y| - |Y| = |X| +|Y| + \bc{|Y|(|X| - 1)}.$$
To this point, we have assumed nothing about the size of $X$ and $Y$ (other than they must both have at least 2 vertices per Theorem \ref{thm:2privateneighbs}). However, we want to maximize the number of edges possible, which means maximizing the highlighted product $\bc{|Y|(|X| - 1)}$. These two factors have a known sum of $n - 3$, so when $|Y|(|X| - 1) = \lceil\frac{n-3}{2}\rceil \lfloor \frac{n-3}{2}\rfloor$, their product is maximized. Thus, an upper bound on the number of edges possible when $x$ and $y$ live in separate partite sets and are non-adjacent is given by
\begin{align*}
    m_3(G) = n - 2 + \Big\lceil\frac{n-3}{2}\Big\rceil \Big\lfloor \frac{n-3}{2}\Big\rfloor
\end{align*}
Once again, the above equation will have two simplified solutions depending on the parity of $n$. When $n$ is even (thus $n-3$ is odd)
\begin{align*}
    m_3(G) &= n-2 + \frac{n^2-6n + 8}{4}\\
    &= \frac{4n {-8} +n^2-6n + {8}}{4}\\
    m_3(G)&= \frac{n(n-2)}{4}.
\end{align*}
When $n$ is odd (thus $n-3$ is even), we get 
\begin{align*}
    m_3(G) &= n - 2 + \frac{n^2 - 6n + 9}{4}\\
    &= \frac{4n - 8 + n^2 -6n + 9}{4}\\
    m_3(G) &= \frac{(n-1)^2}{4}
\end{align*}
So, $$m_3(G) = \begin{cases}
    \frac{n(n-2)}{4}, \text{     if $n$ is even}\\
        \frac{(n-1)^2}{4},\text{     if $n$ is odd.}
\end{cases}$$
Notice that $m_3(G) = m(G)$. This is no coincidence, as we saw in Section \ref{sect:Tightness}, this is the structure that realizes the bound.\\

We have examined all possible bipartite graph constructions on $n \geq 6$ vertices with $\gamma = 2$ and found none which can achieve more edges than the proposed $m(G)$. This completes the proof.
\end{proof}
\subsection{The Bipartite Bound when $n = 3\gamma$}\label{sect:n3g}
In this section, we show the bipartite bound is true when a graph has exactly $3\gamma$ vertices.
\begin{thm}\label{thm:n=3g}
    Let $G$ be a bipartite graph with a unique minimum dominating set of size $\gamma$. Let $G$ $n = 3\gamma$ vertices. Then the size of $G$ is bounded above by $$m(G) = 2\gamma + 2\Big\lceil\frac{\gamma}{2}\Big\rceil\Big\lfloor\frac{\gamma}{2}\Big\rfloor.$$
\end{thm}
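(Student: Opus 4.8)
The plan is to first pin down the rigid structure forced by $n=3\gamma$, classify the edges accordingly, and then use exchange arguments to bound the ``non-forced'' edges.

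First I would exploit Theorem \ref{thm:2privateneighbs}. Let $D$ be the unique minimum dominating set, so $|D|=\gamma$ and $|V(G)\setminus D|=2\gamma$. Each $v\in D$ has $|epn(v,D)|\ge 2$, these sets are pairwise disjoint, and they all live inside the $2\gamma$-element set $V(G)\setminus D$; hence each dominator has \emph{exactly} two exterior private neighbors and every non-dominator is the private neighbor of exactly one dominator. Writing $D=\{d_1,\dots,d_\gamma\}$ and letting $P_k=\{p_{k,1},p_{k,2}\}$ be the pair owned by $d_k$, the vertex set splits into $\gamma$ ``blocks'' $\{d_k\}\cup P_k$. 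Since each $p_{k,t}$ is adjacent to $d_k$ and to no other dominator, and since $G$ is bipartite (so $d_k$ has no neighbor on its own side and $p_{k,1}\not\sim p_{k,2}$), the only possible edges are: the $2\gamma$ forced edges $d_kp_{k,1},d_kp_{k,2}$; edges between two dominators; and edges between two pairs $P_i,P_j$. Bipartiteness forces the latter two kinds to occur only when $d_i$ and $d_j$ lie in opposite partite sets.

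Next I would set $\alpha=|D\cap A|$ and $\beta=|D\cap B|$ with $\alpha+\beta=\gamma$, and call $(d_i,d_j)$ with $d_i\in A,\ d_j\in B$ a \emph{cross pair}; there are $\alpha\beta$ of them. Every non-forced edge is attached to a unique cross pair, so if $w_{ij}$ counts the non-forced edges of the cross pair $(i,j)$ then
$$s(G)=2\gamma+\sum_{(i,j)\ \mathrm{cross}} w_{ij}.$$
The heart of the argument is the claim that $w_{ij}\le 2$ for every cross pair. Granting it, $\sum w_{ij}\le 2\alpha\beta$, and since $\alpha+\beta=\gamma$ forces $\alpha\beta\le\ceil{\frac{\gamma}{2}}\floor{\frac{\gamma}{2}}$, I conclude $s(G)\le 2\gamma+2\ceil{\frac{\gamma}{2}}\floor{\frac{\gamma}{2}}=m(G)$. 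To prove the claim I would argue by contradiction: if a cross pair carries at least three of its five possible non-forced edges (the edge $d_id_j$ together with the four edges of the $K_{2,2}$ between $P_i$ and $P_j$), I produce another dominating set of size $\gamma$. The enabling observation is that deleting $d_i$ (or both $d_i,d_j$) from $D$ leaves \emph{only} the private neighbors $p_{i,1},p_{i,2}$ (resp.\ also $p_{j,1},p_{j,2}$) undominated, since every other vertex keeps its own dominator; so it suffices to re-cover these few block vertices. A finite case check finishes it: if the $P_i$--$P_j$ edges contain two independent edges (a perfect matching of $K_{2,2}$) then a ``diagonal'' pair $\{p_{i,s},p_{j,t}\}$ re-dominates the block; if instead those edges form a star at some $p_{i,s}$ and the edge $d_id_j$ is present, then replacing $d_j$ by $p_{i,s}$ works (and symmetrically with the roles of $i$ and $j$ exchanged). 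Every arrangement of three or more non-forced edges contains one of these patterns, since a two-edge subgraph of $K_{2,2}$ is either a matching or a star and three edges always contain a matching.

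I expect this final case analysis to be the main obstacle — not that any single case is hard, but that one must organize the configurations up to the natural symmetries ($p_{i,1}\leftrightarrow p_{i,2}$, $p_{j,1}\leftrightarrow p_{j,2}$, $i\leftrightarrow j$) and, in each, verify both that the proposed replacement is genuinely a dominating set and that it has size exactly $\gamma$. The one global subtlety to check carefully is that these swaps never disturb vertices outside the block, which is precisely what the ``every non-dominator has exactly one dominator-neighbor'' structure guarantees; degenerate splits such as $\beta=0$ (no cross pairs, so $s(G)=2\gamma$) should be noted as trivially satisfying the bound.
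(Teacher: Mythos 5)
Your proposal is correct and takes essentially the same approach as the paper's proof: both pin down the exact two-private-neighbor structure forced by $n=3\gamma$, observe that all non-forced edges attach to cross pairs of dominators in opposite partite sets, and cap each cross pair at two extra edges by exhibiting a second dominating set of size $\gamma$ (your matching vs.\ star-plus-$\{d_i,d_j\}$ dichotomy is precisely the paper's Case 1/Case 2 analysis), before maximizing $|D_X||D_Y|\le\big\lceil\tfrac{\gamma}{2}\big\rceil\big\lfloor\tfrac{\gamma}{2}\big\rfloor$. Your explicit $w_{ij}$ bookkeeping and the handling of the one-sided case ($\beta=0$) correspond exactly to the paper's edge count and its Part 1.
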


\begin{proof}
     Let $G$ have partite sets $A$ and $B$. Let $D = D_X \cup D_Y$ be the UMD set of $G$. Let $X$ be the set of (non-dominator) vertices dominated by $D_X$ and $Y$  be the set of (non-dominator) vertices dominated by $D_Y$. Finally, let $A = Y \cup D_X$ and $B = X \cup D_Y$. This vertex labeling is visualized below.

\begin{figure}[H]
\centering
   \begin{tikzpicture}
    \node[ellipse,
    draw = black,
    text = black,
    minimum width = 2cm, 
    minimum height = 4cm,label=above:$A$] (e) at (-2,0) {};
    
    \node[ellipse,
    draw = black,
    text = black,
    minimum width = 1cm, 
    minimum height = 2cm] (e) at (-2,1) {$Y$};
     \node[ellipse,
    draw = black,
    text = black,
    minimum width = 1cm, 
    minimum height = 2cm] (e) at (-2,-1) {$D_X$};
    
    \node[ellipse,
    draw = black,
    text = black,
    minimum width = 2cm, 
    minimum height = 4cm,,label=above:$B$] (e) at (2,0) {};
    \node[ellipse,
    draw = black,
    text = black,
    minimum width = 1cm, 
    minimum height = 2cm] (e) at (2,1) {$X$};
     \node[ellipse,
    draw = black,
    text = black,
    minimum width = 1cm, 
    minimum height = 2cm] (e) at (2,-1) {$D_Y$};
\end{tikzpicture}
\caption{$n = 3\gamma$ vertex construction}
\label{fig:3gGraph}
\end{figure}
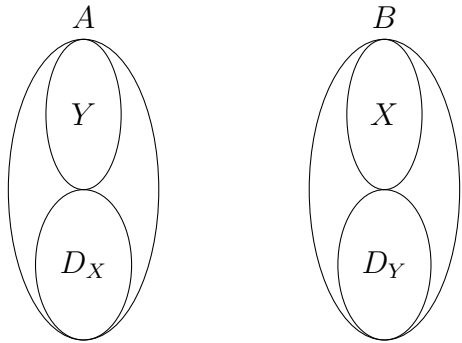
Let $D_X = \{x_1, x_2,\ldots\}$ and $D_Y = \{y_1,y_2,\ldots\}$. By Theorem \ref{thm:2privateneighbs}, each $x$ and $y$ in $D_X$ and $D_Y$ respectively must have two exterior private neighbors with respect to the set $D$. In this case, there are exactly $2\gamma = 2|D|$ vertices in $V(G) - D$. So, the edges between $D$ and $V(G)-D$ must be exactly a 1:2 relation (each vertex in $D$ ``maps" to two vertices in $V(G) - D$). This brings us to our first case.
\proofpart{All dominators in $D$ live in the same partite set.}
Without loss of generality, assume $|D_Y| = 0$, thus $D_X = D$.It follows that $|Y| = 0$. Thus, there are $2\gamma$ vertices in $X$. By the logic above, the only edges that can exist between the two non-empty sets ($D_X$ and $X$) is a 1:2 relation. This results in $s(G) = 2\gamma$.
\proofpart{The dominators of $D$ live in both partite sets.}
To proceed with this proof, we will further articulate the vertex labeling laid out in Figure \ref{fig:3gGraph}. By construction, $|X| = 2|D_X|$ and $|Y| = 2|D_Y|$. So, let $X = \{b_{1,1}, b_{1,2}, b_{2,1},b_{2,2}, \ldots\}$ and $Y = \{a_{1,1}, a_{1,2}, a_{2,1},a_{2,2}, \ldots\}$. Let $\{a_{i,1}, a_{i,2}\} \in N(y_i)$ and $\{b_{j,1}, b_{j,2}\} \in N(x_j)$. Notice that no other edges other than the 1:2 relation described in the previous sentence can exist between $D_X, X$ and $D_Y,Y$. So, we analyze the edges between $X$ and $Y$, as well as the edges between $D_X$ and $D_Y$.\\
Consider the group of vertices $K = \{x_i, b_{i,1},b_{i,2}, y_j,a_{j,1},a_{j,2}\}$. Notice by construction, $D - \{x_i,y_j\}$ dominate $G - K$. So, if any edges we consider cause another duo of vertices (other than $\{x_i,y_j\}$) to dominate $K$, we will find a contradiction. By construction, the following edges already exist:
\begin{figure}[H]
\centering
   \begin{tikzpicture}
   \tikzstyle{vertex}=[circle,fill=black,inner sep=2pt]
    \node[ellipse,
    draw = black,
    text = black,
    minimum width = 2cm, 
    minimum height = 4cm,label=above:$A$] (e) at (-2,0) {};
    
    \node[ellipse,
    draw = black,
    text = black,
    minimum width = 1cm, 
    minimum height = 2cm] (e) at (-2,1) {};
     \node[ellipse,
    draw = black,
    text = black,
    minimum width = 1cm, 
    minimum height = 2cm] (e) at (-2,-1) {};

    \node[ellipse,
    draw = black,
    text = black,
    minimum width = 2cm, 
    minimum height = 4cm,,label=above:$B$] (e) at (2,0) {};
   
    \node[ellipse,
    draw = black,
    text = black,
    minimum width = 1cm, 
    minimum height = 2cm] (e) at (2,1) {};
   
     \node[ellipse,
    draw = black,
    text = black,
    minimum width = 1cm, 
    minimum height = 2cm] (e) at (2,-1) {};

\node [draw, vertex, label = above: $a_{j,1}$] (aj1) at (-2,1.25) {};
\node [draw, vertex, label = below: $a_{j,2}$] (aj2) at (-2,0.75) {};

\node [draw, vertex, label = above: $b_{i,1}$] (bi1) at (2,1.25) {};
\node [draw, vertex, label = below: $b_{i,2}$] (bi2) at (2,0.75) {};

\node [draw, vertex, label = below: $x_i$] (xi) at (-2,-1) {};
\node [draw, vertex, label = below: $y_j$] (yj) at (2,-1) {};

 \draw (bi2) -- (xi) -- (bi1); 
 \draw (aj2) -- (yj) -- (aj1); 
\end{tikzpicture}
\caption{Existing edges by construction}
\label{fig:3gStartingEdges}
\end{figure}
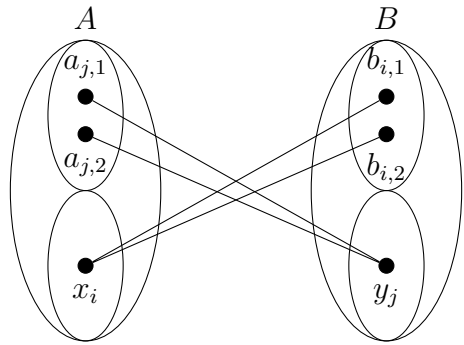
Notice, the $a$ and $b$ vertices are symmetric. So, we will consider two cases:\\
\textbf{Case 1.} $x_i$ and $y_j$ are not adjacent.\\
In this case, there are four possible edges (the complete bipartite graph between the $a$ and $b$ vertices). Without loss of generality, assume $\{a_{j,1}, b_{i,1}\} \in E(G)$. Now, consider what would happen if the edge $\{a_{j,2}, b_{i,2}\}$ existed.
\begin{figure}[H]
\centering
   \begin{tikzpicture}
   \tikzstyle{vertex}=[circle,fill=black,inner sep=2pt]
   \tikzstyle{d}=[circle,fill=red,inner sep=2pt]
    \node[ellipse,
    draw = black,
    text = black,
    minimum width = 2cm, 
    minimum height = 4cm,label=above:$A$] (e) at (-2,0) {};
    
    \node[ellipse,
    draw = black,
    text = black,
    minimum width = 1cm, 
    minimum height = 2cm] (e) at (-2,1) {};
     \node[ellipse,
    draw = black,
    text = black,
    minimum width = 1cm, 
    minimum height = 2cm] (e) at (-2,-1) {};

    \node[ellipse,
    draw = black,
    text = black,
    minimum width = 2cm, 
    minimum height = 4cm,,label=above:$B$] (e) at (2,0) {};
   
    \node[ellipse,
    draw = black,
    text = black,
    minimum width = 1cm, 
    minimum height = 2cm] (e) at (2,1) {};
   
     \node[ellipse,
    draw = black,
    text = black,
    minimum width = 1cm, 
    minimum height = 2cm] (e) at (2,-1) {};

\node [draw, d, label = above: $a_{j,1}$] (aj1) at (-2,1.25) {};
\node [draw, vertex, label = below: $a_{j,2}$] (aj2) at (-2,0.75) {};

\node [draw, vertex, label = above: $b_{i,1}$] (bi1) at (2,1.25) {};
\node [draw, d, label = below: $b_{i,2}$] (bi2) at (2,0.75) {};

\node [draw, vertex, label = below: $x_i$] (xi) at (-2,-1) {};
\node [draw, vertex, label = below: $y_j$] (yj) at (2,-1) {};

 \draw (bi2) -- (xi) -- (bi1); 
 \draw (aj2) -- (yj) -- (aj1); 
 \draw [green] (aj1) edge (bi1);
  \draw [red,dashed] (aj2) edge (bi2);
\end{tikzpicture}
\end{figure}
In this case, $\{a_{j,1}, b_{i,2}\}$ dominates $K$ which is a contradiction. So, the only other two edges that can exist are $\{a_{j,1}, b_{i,2}\}$ and $\{a_{j,2}, b_{i,1}\}$. Notice, adding one of these edges does not result in any new dominating vertices. However, including both edges results in the following.
\begin{figure}[H]
\centering
   \begin{tikzpicture}
   \tikzstyle{vertex}=[circle,fill=black,inner sep=2pt]
   \tikzstyle{d}=[circle,fill=red,inner sep=2pt]
    \node[ellipse,
    draw = black,
    text = black,
    minimum width = 2cm, 
    minimum height = 4cm,label=above:$A$] (e) at (-2,0) {};
    
    \node[ellipse,
    draw = black,
    text = black,
    minimum width = 1cm, 
    minimum height = 2cm] (e) at (-2,1) {};
     \node[ellipse,
    draw = black,
    text = black,
    minimum width = 1cm, 
    minimum height = 2cm] (e) at (-2,-1) {};

    \node[ellipse,
    draw = black,
    text = black,
    minimum width = 2cm, 
    minimum height = 4cm,,label=above:$B$] (e) at (2,0) {};
   
    \node[ellipse,
    draw = black,
    text = black,
    minimum width = 1cm, 
    minimum height = 2cm] (e) at (2,1) {};
   
     \node[ellipse,
    draw = black,
    text = black,
    minimum width = 1cm, 
    minimum height = 2cm] (e) at (2,-1) {};

\node [draw, d, label = above: $a_{j,1}$] (aj1) at (-2,1.25) {};
\node [draw, vertex, label = below: $a_{j,2}$] (aj2) at (-2,0.75) {};

\node [draw, d, label = above: $b_{i,1}$] (bi1) at (2,1.25) {};
\node [draw, vertex, label = below: $b_{i,2}$] (bi2) at (2,0.75) {};

\node [draw, vertex, label = below: $x_i$] (xi) at (-2,-1) {};
\node [draw, vertex, label = below: $y_j$] (yj) at (2,-1) {};

 \draw (bi2) -- (xi) -- (bi1); 
 \draw (aj2) -- (yj) -- (aj1); 
 \draw [green] (aj1) edge (bi1);
  \draw [green] (aj1) edge (bi2);
  \draw [red,dashed] (aj2) edge (bi1);
\end{tikzpicture}
\end{figure}
In this case, $\{a_{j,1}, b_{i,1}\}$ dominates $K$, resulting in a contradiction. Thus, in the scenario where $x_i$ and $y_j$ are not adjacent, there can be a maximum of \textbf{two additional edges} ($\{a_{j,1}, b_{i,1}\}$ and $\{a_{j,1}, b_{i,2}\}$, up to symmetry).\\
\textbf{Case 2.} $x_i$ and $y_j$ are adjacent.\\
In this case, there are still four possible edges (the complete bipartite graph between the $a$ and $b$ vertices) to consider. Without loss of generality, assume $\{a_{j,1}, b_{i,1}\} \in E(G)$.
\begin{figure}[H]
\centering
   \begin{tikzpicture}
   \tikzstyle{vertex}=[circle,fill=black,inner sep=2pt]
   \tikzstyle{d}=[circle,fill=red,inner sep=2pt]
    \node[ellipse,
    draw = black,
    text = black,
    minimum width = 2cm, 
    minimum height = 4cm,label=above:$A$] (e) at (-2,0) {};
    
    \node[ellipse,
    draw = black,
    text = black,
    minimum width = 1cm, 
    minimum height = 2cm] (e) at (-2,1) {};
     \node[ellipse,
    draw = black,
    text = black,
    minimum width = 1cm, 
    minimum height = 2cm] (e) at (-2,-1) {};

    \node[ellipse,
    draw = black,
    text = black,
    minimum width = 2cm, 
    minimum height = 4cm,,label=above:$B$] (e) at (2,0) {};
   
    \node[ellipse,
    draw = black,
    text = black,
    minimum width = 1cm, 
    minimum height = 2cm] (e) at (2,1) {};
   
     \node[ellipse,
    draw = black,
    text = black,
    minimum width = 1cm, 
    minimum height = 2cm] (e) at (2,-1) {};

\node [draw, vertex, label = above: $a_{j,1}$] (aj1) at (-2,1.25) {};
\node [draw, vertex, label = below: $a_{j,2}$] (aj2) at (-2,0.75) {};

\node [draw, vertex, label = above: $b_{i,1}$] (bi1) at (2,1.25) {};
\node [draw, vertex, label = below: $b_{i,2}$] (bi2) at (2,0.75) {};

\node [draw, vertex, label = below: $x_i$] (xi) at (-2,-1) {};
\node [draw, vertex, label = below: $y_j$] (yj) at (2,-1) {};

\draw (bi2) -- (xi) -- (bi1); 
\draw (aj2) -- (yj) -- (aj1); 
\draw [green] (aj1) edge (bi1);
\draw [green] (xi) edge (yj);
\end{tikzpicture}
\end{figure}
Similar to above, the edge $\{a_{j,2}, b_{i,2}\}$ would result in a contradiction. The other two edges are symmetric, so without loss of generality, assume the edge $\{a_{j,1}, b_{i,2}\}$ existed.
\begin{figure}[H]
\centering
   \begin{tikzpicture}
   \tikzstyle{vertex}=[circle,fill=black,inner sep=2pt]
   \tikzstyle{d}=[circle,fill=red,inner sep=2pt]
    \node[ellipse,
    draw = black,
    text = black,
    minimum width = 2cm, 
    minimum height = 4cm,label=above:$A$] (e) at (-2,0) {};
    
    \node[ellipse,
    draw = black,
    text = black,
    minimum width = 1cm, 
    minimum height = 2cm] (e) at (-2,1) {};
     \node[ellipse,
    draw = black,
    text = black,
    minimum width = 1cm, 
    minimum height = 2cm] (e) at (-2,-1) {};

    \node[ellipse,
    draw = black,
    text = black,
    minimum width = 2cm, 
    minimum height = 4cm,,label=above:$B$] (e) at (2,0) {};
   
    \node[ellipse,
    draw = black,
    text = black,
    minimum width = 1cm, 
    minimum height = 2cm] (e) at (2,1) {};
   
     \node[ellipse,
    draw = black,
    text = black,
    minimum width = 1cm, 
    minimum height = 2cm] (e) at (2,-1) {};

\node [draw, vertex, label = above: $a_{j,1}$] (aj1) at (-2,1.25) {};
\node [draw, vertex, label = below: $a_{j,2}$] (aj2) at (-2,0.75) {};

\node [draw, vertex, label = above: $b_{i,1}$] (bi1) at (2,1.25) {};
\node [draw, vertex, label = below: $b_{i,2}$] (bi2) at (2,0.75) {};

\node [draw, vertex, label = below: $x_i$] (xi) at (-2,-1) {};
\node [draw, vertex, label = below: $y_j$] (yj) at (2,-1) {};

\draw (bi2) -- (xi) -- (bi1); 
\draw (aj2) -- (yj) -- (aj1); 
\draw [green] (aj1) edge (bi1);
\draw [green] (xi) edge (yj);
\draw [red, dashed] (bi1) edge (aj2);
\end{tikzpicture}
\end{figure}
In this case, the set $\{a_{j,1},y_j\}$ dominates $K$, leading to a contradiction. Thus, in the scenario where $x_i$ and $y_j$ are adjacent, there can be a maximum of \textbf{two additional edges} ($\{x_i,y_j\}$ and $\{a_{j,1}, b_{i,1}\}$, up to symmetry).\\
So, we began with $2\gamma$ edges that must exist between $D$ and $G-D$. Now, for every choice of $x \in D_X$ and $y \in D_Y$, there can be at most two additional edges. Thus, $$s(G) \leq 2\gamma + 2(|D_X||D_Y|).$$ Now, $|D_X| + |D_Y| = \gamma$, thus to maximize their product, we want the two sets to be as close to equal in size as possible. This is achieved by $$s(G) \leq2\gamma + 2(|D_X||D_Y|) \leq 2\gamma + 2\Big(\Big\lceil\frac{\gamma}{2} \Big\rceil \Big\lfloor\frac{\gamma}{2} \Big\rfloor\Big)$$
which completes the proof.
\end{proof}
In the next section, we provide our own general graph construction that meets Fischermann's bound.
\subsection{Another Construction for Fischermann's Bound}\label{sect:FishConstruction}
In their 2003 paper, Fischermann et al. provide a generalized construction that meets their proposed edge bound, demonstrating the bound is tight \cite{Original_Paper}. In their construction, the dominating set forms a clique, and many vertices are dominated by more than one vertex. In this section, we provide another construction that meets Fischermann's bound. This time, the construction results in a perfectly dominated graph (Definition \ref{defn:PerfecrDom}). Recall, all graphs with a dominating number of one are perfectly dominated, so a construction is given for $\gamma \geq 2$.
\begin{thm}\label{thm:PerfectGraphAllG}
     Let $\mathcal{G}_{n,\gamma}$ be the set of all uniquely-dominatable simple graphs without isolated vertices with order $n$ and dominating number $\gamma$. Then, for each choice of $n\geq 3\gamma$ and $\gamma \geq 2$, there exists a perfectly dominated $G \in \mathcal{G}_{n,\gamma}$ where $$s(G) = \mathbf{m}(n,\gamma) = {n - \gamma\choose{2}} - \gamma(\gamma-2).$$
\end{thm}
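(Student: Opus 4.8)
The plan is to exhibit an explicit perfectly dominated graph and then verify the three required properties (perfect domination, $\gamma(G)=\gamma$, and uniqueness) together with the edge count. Following Definition \ref{defn:PerfecrDom}, I take the dominating set $D=\{d_1,\dots,d_\gamma\}$ to be an \emph{independent} set, partition the remaining vertices $W=V\setminus D$ into private neighborhoods $P_1,\dots,P_\gamma$ with $d_i$ adjacent to exactly the vertices of $P_i$, and require $|P_i|\ge 2$ for every $i$ (forced anyway by Theorem \ref{thm:2privateneighbs}). Perfect domination is then immediate: $D$ is independent and each $w\in W$ is adjacent to exactly one dominator, so $\sum_{x\in D}\deg(x)=|W|=n-\gamma$. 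Since $D$ contributes exactly $n-\gamma$ edges in any such graph, reaching Fischermann's value $\binom{n-\gamma}{2}-\gamma(\gamma-2)$ is equivalent to placing edges \emph{inside} $W$ so that $W$ is a clique minus exactly $\gamma(\gamma-2)+(n-\gamma)$ non-edges. The whole construction therefore amounts to choosing this deficiency set of non-edges so that uniqueness survives while the count lands precisely on target.

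For the domination number, I would argue exactly as in Observation \ref{obs:NoCommonNeighbs}: because $D$ is independent and the $P_i$ partition $W$, the closed neighborhoods $N[d_i]=\{d_i\}\cup P_i$ are pairwise disjoint. Any dominating set must meet each $N[d_i]$, so $\gamma(G)\ge\gamma$, and $D$ itself dominates, giving $\gamma(G)=\gamma$. This disjointness also pins down the form of every competing minimum dominating set: a dominating set $S$ of size $\gamma$ contains exactly one representative $s_i\in\{d_i\}\cup P_i$ for each $i$. Thus uniqueness reduces to a purely internal statement about $W$.

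The heart of the proof is the uniqueness argument, which I would phrase as a covering condition. If $S\ne D$, let $I=\{i:s_i\neq d_i\}\neq\emptyset$, with $s_i=p_i\in P_i$ for $i\in I$; the vertices $d_j$ for $j\notin I$ reach only into $P_j$, so the set $\bigcup_{i\in I}P_i$ must be dominated entirely by the private representatives $\{p_i\}_{i\in I}$. The edges inside $W$ must be chosen so that this covering \emph{always} fails. Single-index swaps ($|I|=1$) are blocked by ensuring no vertex of $P_i$ is adjacent to all other vertices of $P_i$ (no universal vertex inside each private neighborhood), which one can guarantee by designating a small number of ``protected'' vertices per $P_i$ in the spirit of the guard vertices used in Theorem \ref{thm:Constructions}. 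Multi-index swaps ($|I|\ge 2$) are the dangerous case, since a vertex $p_j\in P_j$ could otherwise cover all of a neighboring $P_i$ through cross edges; these are blocked by installing a controlled pattern of non-edges \emph{between} distinct private neighborhoods, and it is precisely these cross non-edges that account for the $\gamma(\gamma-2)$ term, while the within-neighborhood protection accounts for the linear $(n-\gamma)$ portion.

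The main obstacle will be calibrating the non-edge set to do two incompatible-seeming things at once: it must be large and structured enough that the covering condition fails for \emph{every} nonempty index set $I$ and \emph{every} transversal choice $\{p_i\}$, yet it must consist of \emph{exactly} $\gamma(\gamma-2)+(n-\gamma)$ non-edges, neither more nor fewer, so that the size equals $\binom{n-\gamma}{2}-\gamma(\gamma-2)$ on the nose. I would handle this by giving an explicit placement rule for the non-edges and then proving the covering failure by induction on $|I|$, after which the edge count is a routine verification. I expect the quantifier over all subsets $I$ and all transversals in the uniqueness step to be the genuinely delicate part, with everything else reducing to bookkeeping.
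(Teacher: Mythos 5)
Your framework is sound and, at the skeleton level, matches the paper's: an independent dominating set $D=\{d_1,\dots,d_\gamma\}$ with pairwise disjoint closed neighborhoods $\{d_i\}\cup P_i$, which gives $\gamma(G)\ge\gamma$ exactly as in Observation \ref{obs:NoCommonNeighbs}, forces any competing minimum dominating set to be a transversal of the sets $\{d_i\}\cup P_i$, and reduces uniqueness to a covering condition inside $W=V\setminus D$; your non-edge budget $(n-\gamma)+\gamma(\gamma-2)$ is also the correct arithmetic. But the theorem is an existence statement, and your proposal never produces the object: the "explicit placement rule" for the non-edges and the induction on $|I|$ proving the covering failure are both deferred, and they are precisely the content of the theorem. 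What you have verified is only the conditional statement that \emph{if} a placement exists meeting your conditions, \emph{then} the edge count lands on $\binom{n-\gamma}{2}-\gamma(\gamma-2)$.

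Moreover, the calibration you commit to is infeasible in the boundary case. You assign the within-neighborhood non-edges a share of $n-\gamma$ and the cross non-edges a share of $\gamma(\gamma-2)$. When $n=3\gamma$, every $P_i$ has exactly two vertices, so each $P_i$ can contain at most one non-edge, for a within-neighborhood total of at most $\gamma<2\gamma=n-\gamma$; no placement realizing your split exists. The paper's construction uses the opposite split: within-neighborhood non-edges $a_i\not\sim b_i$ (plus $b_1\not\sim R$), totaling $\gamma+(n-3\gamma)$, and $\gamma(\gamma-1)$ cross non-edges, realized by a concrete staircase pattern --- $B=\{b_1,\dots,b_\gamma\}$ independent, $b_i\sim a_j$ iff $i>j$, $A\cup R$ a clique, $b_i\sim R$ iff $i\ge 2$. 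That pattern is not incidental: it is what makes uniqueness provable, via a two-pass unraveling in place of your induction on $|I|$ (the sparse neighborhoods $N(b_1)=\{x_1\}$, $N(b_2)=\{x_2,a_1\}\cup R$, etc.\ successively force $k_i\in\{b_i,x_i\}$, and then $a_\gamma,a_{\gamma-1},\dots$ successively force $k_i=x_i$). To complete your proof you would need to exhibit such a pattern explicitly, and your guiding numerology would first have to be corrected before any pattern satisfying it can exist.
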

\begin{proof}
    Let $G$ be a simple graph with order $n$. Let $n = 3\gamma + c$. Let the sets $D,A,B,R$ partition $V(G)$. Let $|D| = |A| = |B| = \gamma$, thus $|R| = c$. Let $D = \{x_1,x_2,\ldots,x_\gamma\}$, $A = \{a_1,a_2,\ldots,a_\gamma\}$, $B = \{b_1,b_2,\ldots,b_\gamma\}$, and $R = \{r_1,r_2,\ldots,r_c\}$. Now, add the following edges $G$.
\begin{enumerate}
 \item $\{a_i,x_i\}$ and $\{b_i,x_i\}$ for $i = 1,2,\ldots,\gamma$.
 \item $\{x_1,r\}$ for all $r \in R$.
 \item $\{b_i,a_j\}$ for all combinations of $i = 2,3,\ldots,\gamma$ and $j = 1,2,\ldots,\gamma$ where $i > j$.
 \item $\{b_i,r\}$ for all combinations of $i = 2,3,\ldots,\gamma$ and $r\in R$.
\item All edges possible (a complete subgraph) within the set $A \cup R$.
\end{enumerate}
Now, we must show two things: $s(G) = \mathbf{m}(n,\gamma)$ and $G$ has a unique minimum dominating set.
\proofpart{The size of $G$.}
Let us quantify how many edges each of the five groups of edges enumerated above added to $G$. The subsequent enumeration follows the same order as above.
\begin{enumerate}
    \item 2$\gamma$ edges added.
    \item $r$ edges added.
    \item $\sum_{j=1}^\gamma (\gamma - j) = (\gamma - 1) + (\gamma - 2) + \cdots + 2 + 1 = \frac{(\gamma-1)(\gamma)}{2} = $ ${\gamma}\choose{2}$ edges added.
    \item $r(\gamma-1)$ edges added.
    \item ${\gamma + r}\choose{2}$ edges added.
\end{enumerate}
So, $$s(G) = {{\gamma + r}\choose{2}} + {{\gamma}\choose{2}} + 2\gamma + r(\gamma - 1) + r.$$
Let us expand $s(G)$:
\begin{align*}
    s(G) &= \frac{(\gamma + r)(\gamma + r - 1)}{2} + \frac{\gamma(\gamma - 1)}{2} + \frac{4\gamma}{2} + \frac{2r(\gamma - 1)}{2} + \frac{2r}{2}\\
    &= \frac{1}{2}\bigg(\gamma^2 + r\gamma - \gamma + r\gamma + r^2 - r + \gamma^2 - \gamma +  4\gamma + 2r\gamma \cancel{- 2r} + \cancel{2r} \bigg)\\
    &=\boxed{ \frac{1}{2} \bigg(2\gamma^2 + 4r\gamma + 2\gamma + r^2 - r\bigg)}
\end{align*}
Now, let us expand $\mathbf{m}(n,\gamma)$, plugging in $n = 3\gamma + r$:
\begin{align*}
    \mathbf{m}(n,\gamma) &= {3\gamma + r - \gamma\choose{2}} - \gamma(\gamma-2)\\
    &=\frac{(2\gamma + r)(2\gamma + r-1)}{2} - \frac{2\gamma(\gamma-2)}{2}\\
    &= \frac{1}{2}\bigg(4\gamma^2 + 2r\gamma -2\gamma + 2r\gamma + r^2 - r -2\gamma^2 + 4\gamma \bigg)\\
    &=  \boxed{\frac{1}{2}\bigg(2\gamma^2 + 4r\gamma + 2\gamma + r^2 - r \bigg)}
\end{align*}
It is clear that $s(G) = \mathbf{m}(n,\gamma)$.
\proofpart{$G$ has a unique minimum dominating set.}
Notice, even in this general construction, Observation \ref{obs:NoCommonNeighbs} from the proof of Theorem \ref{thm:Constructions} applies to $G$ here as well. That is to say $D$ is an independent set, and the vertices of $D$ share no common neighbors which implies $\gamma(G) \geq \gamma$. Now, $D$ is a valid dominating set as it clearly dominates $A$, $B$, and $R$. Since $|D| = \gamma$, it follows that $\gamma(G) = \gamma$. So, we show that $D$ is unique. By Observation \ref{obs:NoCommonNeighbs}, if we want to build a dominating set $K = \{k_1,k_2,\ldots,k_\gamma\}$ of size $\gamma$, each subsequent vertex of $K$ must be a vertex in the closed neighborhood of a different vertex of $D$. That is to say $k_1 \in \{a_1,b_1,x_1\} \cup R$ and $k_i \in \{a_i,b_i,x_i\}$ for $i = 2,3,\ldots,\gamma$ (up to relabeling).\\
For the sake of contradiction, assume $k_1 \in \{a_1\} \cup R$. Notice, no vertex in $K$ can dominate $b_1$ (as its only neighbor is $x_1$ which we have ruled out of being in $K$ in this case). So, $k_1 \in \{b_1,x_1\}$. Now, $k_2 \in \{a_2,b_2,x_2\}$. If $k_2 = a_2$, then no vertex in $K$ can dominate $b_2$ (as its only neighbors are $x_2$ and $b_1$, both of which have have been ruled out of $K$ in this case). So, $k_2 \in \{b_2,x_2\}$. Now, $k_3 \in \{a_3,b_3,x_3\}$. Notice, $N(b_3) = \{x_3,a_2,a_1\}$. The latter two vertices have been ruled out of $K$, so once again $k_3 \not=a_3$. Notice, this pattern will continue to unravel (given the symmetry of the chosen edges), and leave us with $k_i \in \{b_i,x_i\}$. Now, $N(a_\gamma) \subset R \cup A \cup \{x_\gamma\}$, the former two sets are already ruled out of $K$. So, $k_\gamma \not= b_\gamma$. Now $N(a_{\gamma - 1}) \subset R\cup A \cup \{b_{\gamma}\} \cup \{x_{\gamma - 1} \}$. The former three sets are already ruled out of $K$, so $k_{\gamma-1} \not =b_{\gamma - 1}$. Once again, this pattern will unravel (this time in the reverse direction) all the way down until $k_i \in \{x_i\}$, which tells us $K = D$ is a unique minimum dominating set.
\end{proof}
As discussed at the end of Section \ref{sect:Tightness}, it seems counterintuitive that a perfectly dominated graph can have the proposed maximum number of edges possible. Perfect domination restricts edges within the dominating set and between the dominating set and the rest of the graph. Yet, for both our bipartite bound and Fischermann's bound for general graphs, an extremal construction exists that also achieves perfect domination.
\newpage
\begin{spacing}{1}
\bibliographystyle{unsrt}
\bibliography{MyRef}
\end{spacing}
\end{document}